\documentclass[reqno]{amsart}
\usepackage[left=1in,right=1in,top=1in,bottom=1in]{geometry}
\setlength{\headheight}{23pt}

 \usepackage{microtype}
\usepackage{hyperref}
         
\usepackage{amsmath}             
\usepackage{amsfonts}             
\usepackage{amsthm}               
\usepackage{amsbsy}
\usepackage{amssymb}
\usepackage{amsthm}
\usepackage[nobysame]{amsrefs}

\newtheorem{thm}{Theorem}[section]
\newtheorem{lem}[thm]{Lemma}
\newtheorem{prop}[thm]{Proposition}

\theoremstyle{definition}
\newtheorem{defn}[thm]{Definition}

\newtheorem{rem}[thm]{Remark}

\newcommand{\bC}{{\mathbb{C}}}
\newcommand{\bN}{{\mathbb{N}}}

\newcommand{\A}{{\mathcal{A}}}

\newcommand{\I}{{\mathcal{I}}}

\newcommand{\M}{{\mathcal{M}}}

\newcommand{\X}{{\mathcal{X}}}





\newcommand{\qqand}{\qquad\text{and}\qquad}

\newcommand{\inv}{{\langle -1 \rangle}}
\newcommand{\op}{{\mathrm{op}}}

\usepackage{tikz}
\usetikzlibrary{shapes,snakes,calc,arrows}
\usetikzlibrary{decorations.pathreplacing,shapes.geometric}
\usetikzlibrary{calc,positioning}
\tikzset{Box/.style={very thick, rounded corners}}
\tikzset{marked/.style={star, star point height = .75mm, star points =5, fill=black,minimum size=2mm, inner sep=0mm} }
\tikzset{verythickline/.style = {line width=7pt}}
\tikzset{thickline/.style = {line width=5pt}}
\tikzset{medthick/.style = {line width=3pt}}
\tikzset{med/.style = {line width=2pt}}
\tikzset{count/.style = {fill=white,circle,draw,thin, inner sep=2pt}}
\tikzset{rcount/.style = {fill=white,rectangle,draw,thin,inner sep=2pt, rounded corners}}
\tikzset{cpr/.style = {draw,fill=white,rectangle,thin, rounded corners}}

\definecolor{ggreen}{HTML}{00BB33}

\begin{document}

\nocite{*}

\title[A Combinatorial Approach to the Opposite Bi-Free Partial $S$-Transform]{A Combinatorial Approach to the \\ Opposite Bi-Free Partial $S$-Transform}

\author{Paul Skoufranis}
\address{Department of Mathematics and Statistics, York University, 4700 Keele Street, Toronto, Ontario, M3J 1P3, Canada}
\email{pskoufra@yorku.ca}

\subjclass[2010]{46L54, 46L53}
\date{\today}
\keywords{Bi-Free Probability, Partial Bi-Free $S$-Transform, Bi-Free Convolutions}

\begin{abstract}
In this paper, we present a combinatorial approach to the opposite 2-variable bi-free partial $S$-transforms where the opposite multiplication is used on the right.  In addition, extensions of this partial $S$-transforms to the conditional bi-free and operator-valued bi-free settings are discussed.
\end{abstract}

\maketitle

\section{Introduction}

Since \cites{V1986, V1987} the use of analytic functions as transformations has been a vital part of free probability.  With the inception of bi-free probability by Voiculescu in \cite{V2014} it has been natural to ask whether extensions of past results hold and can be used to develop new theories.  Using analytic techniques similar to those from \cite{H1997}, Voiculescu in \cites{V2016-1, V2016-2} developed  bi-free partial $R$- and $S$-transforms to study additive and multiplicative bi-free convolution of bi-partite systems.  Furthermore, due to the combinatorial breakthroughs in \cites{CNS2015-1, CNS2015-2}, the author of this paper was able to develop new proofs to these results in \cites{S2016-1, S2016-2} using the combinatorics of cumulants.  These alternative proofs were essential in extending the bi-free partial $R$-transform to the conditional bi-free setting in \cites{GS2016-1, GS2016-2} and extending both transformations to the operator-valued bi-free setting in \cite{S2016-3}.  The bi-free partial transforms developed in these papers were instrumental in developing bi-free infinite divisibility theorems such as those in \cites{GHM2015, HW2016, GS2016-1}.

However, one important question remained with respect to the bi-free partial $S$-transform.  Given a non-commutative probability space $(\A, \varphi)$ and elements $a, b \in \A$ with $\varphi(a), \varphi(b) \neq 0$, in \cites{V2016-2, S2016-1} a holomorphic function $S_{a,b}(z,w)$ defined on a neighbourhood of $(0,0)$ was constructed.  The important property of $S_{a,b}(z,w)$ was that if $(a_1, b_1)$ and $(a_2, b_2)$ are pairs in $\A$ with non-zero moments that are bi-free with respect to $\varphi$, then
\[
S_{a_1a_2, b_1b_2}(z, w) = S_{a_1, b_1}(z,w) S_{a_2, b_2}(z,w);
\]
that is, the product of the bi-free partial $S$-transforms of $(a_1, b_1)$ and $(a_2, b_2)$ is the bi-free partial $S$-transform of the natural product of $(a_1, b_1)$ and $(a_2, b_2)$.  As it was demonstrated in \cite{CNS2015-2}*{Theorem 5.2.1} that the use of the opposite multiplication on the right-hand component of the pairs leads to combinatorial structures similar to those in free probability (see Subsection \ref{subsec:Bi-Krew}), it was natural to ask whether there was an opposite bi-free partial $S$-transform; that is a holomorphic function $S^\op_{a,b}(z,w)$ defined on a neighbourhood of $(0,0)$ such that if $(a_1, b_1)$ and $(a_2, b_2)$ are pairs in $\A$ with non-zero moments that are bi-free with respect to $\varphi$, then
\begin{align}
S^\op_{a_1a_2, b_2b_1}(z, w) = S^\op_{a_1, b_1}(z,w) S^\op_{a_2, b_2}(z,w). \label{eq:S-op-formula-intro}
\end{align}

At the time of \cite{S2016-1}, the author endeavoured to solve this problem.  Using similar combinatorial techniques to those in \cite{S2016-1} that are slightly more intricate due to a recursive nature, Lemma \ref{lem:K-expression} of this paper was obtainable.  The remaining gap to obtaining an $S^\op_{a,b}(z,w)$ with the desired properties was rearranging the expression in Lemma \ref{lem:K-expression} in the appropriate manner and this remained elusive.

In \cite{HW2017}, a deeper study of bi-free partial $S$-transforms was performed from an analytical standpoint.  In particular, in order to develop a theory of bi-free multiplicative infinite divisibility, a formula for the opposite bi-free partial $S$-transform was required and discovered.  Consequently, due to the expression in \cite{HW2017}, it is now possible to complete the combinatorial approach started in \cite{S2016-1}.

This paper contains four sections excluding this introduction, which are structured as follow.  In Section \ref{sec:Transforms} we recall the necessary transformations from free and bi-free probability necessary in this paper.  In particular Proposition \ref{prop:S-op-new-defn} demonstrates an alternative expression for $S^\op_{a,b}(z,w)$ than that which was given in \cite{HW2017}.  The paper continues by recalling the necessary combinatorial tools in Section \ref{sec:Tools} and proving equation \eqref{eq:S-op-formula-intro} in Section \ref{sec:Proof}.

As the main uses of the combinatorial proofs of the bi-free partial transforms thus far has been the extension of these transformations to the operator-valued bi-free and conditional bi-free settings, we conclude this paper with Section \ref{sec:OtherSettings} analyzing whether the results of this paper extend to these settings.  Unfortunately, the answers appear to be negative.

\section{The Transforms}
\label{sec:Transforms}

In this section, we recall several transformations in free and bi-free probability and set the notation for the paper, which is consistent with the notation in  \cites{V2016-2, S2016-1}.  

\subsection{Free Transforms}

Let $(\A, \varphi)$ be a non-commutative probability space (that is, a unital algebra $\A$ with a linear functional $\varphi : \A \to \bC$ such that $\varphi(I) = 1$).  We will always assume that $\A$ is a Banach algebra with norm $\left\| \, \cdot \, \right\|_\A$ and $\varphi$ is a continuous linear function.  These assumptions are made so that the power series defined in this paper converge absolutely and define holomorphic functions.

Fix $a \in \A$.  Recall the \emph{moment series of $a$} is defined to be
\begin{align*}
h_a(z) := \varphi((I-az)^{-1}) = \sum_{n\geq 0} \varphi(a^n) z^{n} 
\end{align*}
and, if $\kappa_n(a)$ denotes the $n^{\mathrm{th}}$ free cumulant of $a$, the \emph{cumulant series of $a$} is
\begin{align*}
c_a(z) := \sum_{n\geq 1} \kappa_n(a) z^n.
\end{align*}

\begin{rem}
\label{rem:convergence-free}
We momentarily pause to discuss the convergence of the above series.  Notice for all $n \in \bN$ that
\[
\left|\varphi(a^n) z^n\right| \leq \left\|\varphi\right\| \left\|a\right\|^n_\A |z|^n.
\]
Hence, provided $|z| < \left\|a\right\|_\A^{-1}$, we see that $h_a(z)$ converges absolutely.

The moment-cumulant formula will immediately imply that $c_a(z)$ converges absolutely if $z$ is sufficiently small.  To see this, let $NC(n)$ denote the set of non-crossing partitions on $n$ elements, let $1_n$ denote the full partition $1_n = \{\{1, \ldots, n\}\}$, and let $\mu_{NC}$ denote the M\"{o}bius function on the set of non-crossing partitions.  Then for each $n \in \bN$, $\kappa_n(a)$ is a sum of at most $|NC(n)|$ 
terms of the form $\mu_{NC}(\pi, 1_n)$ for some $\pi\in NC(n)$ times a product of moments of $a$ each of which is  bounded in absolute value by $\max\{ 1, \left\|\varphi\right\|^n \} \left\|a \right\|^n_\A$.  Due to the lattice  structure, $|\mu_{NC}(\pi, 1_n)|$ is at most $|NC(n)|$.  Since $|NC(n)|$ is the $n^\mathrm{th}$ Catalan number $C_n$ and since $C_n \leq 4^n$, $c_a(z)$ converges absolutely if $z$ is sufficiently small.
\end{rem}

To define the $S$-transform of $a$, we illustrate two possible methods.  For both we must assume $\varphi(a) \neq 0$. 
For the first, let $\psi_a(z) := h_a(z) - 1$.  Since $\psi_a(0) = 0$ and $\psi'_a(z) = \varphi(a) \neq 0$, $\psi_a(z)$ has an inverse under composition, denoted $\psi^{\inv}_a(z)$ (see \cite{D2006} for analytic arguments).  We define $\X_a(z) := \psi^{\inv}_a(z)$ so that
\begin{align}
h_a(\X_a(z)) = 1 + \psi_a(\X_a(z)) = 1+z. \label{eq:X-into-moment}
\end{align}
The $S$-transform of $a$ can then defined by \cite{V1987} to be
\begin{align}
S_a(z) = \frac{1+z}{z} \X_a(z).  \label{eq:S-X-form}
\end{align}
For the second, notice since $c_a(0) = 0$ and $c'_a(z) = \kappa_1(a) = \varphi(a) \neq 0$, $c_a(x)$ has an inverse under composition, denoted $c_a^{\inv}(z)$.  The $S$-transformation of $a$ can also be defined by \cite{NS1997} to be
\begin{align}
S_a(z) = \frac{1}{z} c_a^{\inv}(z).  \label{eq:S-cumulants-form}
\end{align}

\subsection{Bi-Freeness}

For more background on scalar-valued bi-free probability, we refer the reader to the summary in \cite{CNS2015-2}*{Section 2}.  

For a map $\chi : \{1,\ldots, n\} \to \{\ell, r\}$, the set of bi-non-crossing partitions on $\{1,\ldots, n\}$ associated to $\chi$ is denoted by $BNC(\chi)$ and $1_\chi$ is used to denote the full partition. Given two elements $\pi, \sigma \in BNC(\chi)$, we will use $\pi \vee \sigma$ to denote the smallest element of $BNC(\chi)$ that is larger than both $\pi$ and $\sigma$ under the usual ordering of refinement.

Given a non-commutative probability space $(\A, \varphi)$ and elements $\{a_n\}^n_{n=1} \subseteq \A$, the $(\ell, r)$-cumulant associated to a map $\chi : \{1,\ldots, n\} \to \{\ell, r\}$ is denoted $\kappa_{\chi}(a_1, \ldots, a_n)$.  Given a $\pi \in BNC(\chi)$, each block $V$ of $\pi$ corresponds to the bi-non-crossing partition $1_{\chi_V}$ for some $\chi_V : V \to \{\ell, r\}$ (where the ordering on $V$ is induced from $\{1,\ldots, n\}$).  We denote
\[
\kappa_\pi(a_1, \ldots, a_n) = \prod_{V \text{ a block of }\pi} \kappa_{1_{\chi_V}}((a_1,\ldots, a_n)|_V)
\]
where $(a_1,\ldots, a_n)|_V$ denotes the $|V|$-tuple where indices not in $V$ are removed.  

For $n,m\geq 0$, we will only ever consider the maps $\chi_{n,m} : \{1,\ldots, n+m\} \to \{\ell, r\}$ such that $\chi(k) = \ell$ if $k \leq n$ and $\chi(k) = r$ if $k > n$.  For notation purposes, it will be useful to think of $\chi_{n,m}$ as a map on $\{1_\ell, 2_\ell, \ldots, n_\ell, 1_r, 2_r, \ldots, m_r\}$ under the identification $k \mapsto k_\ell$ if $k \leq n$ and $k \mapsto (k-n)_r$ if $k > n$. Furthermore, we denote $BNC(n,m)$ for $BNC(\chi_{n,m})$, $1_{n,m}$ for $1_{\chi_{n,m}}$, and, for $n,m\geq 1$, $\kappa_{n,m}(a_1, \ldots, a_n, b_1, \ldots, b_m)$ for $\kappa_{1_{n,m}}(a_1, \ldots, a_n, b_1, \ldots, b_m)$.  In the case that $a_1 = a_2 = \cdots = a_n = a$ and $b_1 = b_2 = \cdots = b_m = b$, we will use $\kappa_\pi(a,b)$ to denote $\kappa_{\pi}(a_1, \ldots, a_n, b_1, \ldots, b_m)$ for all $\pi \in BNC(n,m)$.  Consequently, for $n,m \geq 1$, we have $\kappa_{n,m}(a,b) = \kappa_{1_{n,m}}(a,b)$, $\kappa_{n,0}(a,b) = \kappa_n(a)$, and $\kappa_{0,m}(a,b) = \kappa_{n}(b)$.

\subsection{Bi-Free Series}

Given a Banach non-commutative probability space $(\A, \varphi)$ and a pair $(a, b)$ of elements in $\A$, we define the \emph{ordered joint moment and cumulant series of the pair $(a,b)$} to be
\[
H_{a,b}(z,w) := \sum_{n,m\geq 0} \varphi(a^nb^m) z^n w^m \qqand C_{a,b}(z,w) := \sum_{n,m\geq 0} \kappa_{n,m}(a,b) z^n w^m
\]
respectively (where $\kappa_{0,0}(a,b) = 1$).  Note $H_{a,b}(z,w)$ and $C_{a,b}(z,w)$ are not sufficient to describe all joint moments of $a$ and $b$ unless $a$ and $b$ commute.

\begin{rem}
\label{rem:convergence-bi-free}
We momentarily pause again to discuss the convergence of the above series.  Notice for all $n \in \bN$ that
\[
\left|\varphi(a^nb^m) z^nw^m\right| \leq \left\|\varphi\right\| \left\|a\right\|^n_\A\left\|b\right\|^m_\A |z|^n |w|^m.
\]
Hence, provided $|z| < \left\|a\right\|_\A^{-1}$ and $|w| < \left\|b\right\|_\A^{-1}$, we see that $H_{a,b}(z)$ converges absolutely.  As the combinatorics of bi-non-crossing partitions is identical to that of non-crossing partitions up to a permutation, similar arguments to those in Remark \ref{rem:convergence-free} implies that $C_{a,b}(z,w)$ converges if $z$ and $w$ are sufficiently small.
\end{rem}

Note \cite{S2016-2}*{Theorem 7.2.4} demonstrates that
\begin{align}
h_a(z) + h_b(w) = \frac{h_a(z)h_b(w)}{H_{a,b}(z,w)}  + C_{a,b}(z h_a(z), w h_b(w)) \label{eq:bi-moment-and-bi-cumulant}
\end{align}
through combinatorial techniques. In addition, for notational purposes it will be helpful to consider the series
\begin{align}
K_{a,b}(z,w) = \sum_{n,m\geq 1} \kappa_{n,m}(a,b) z^n w^m = C_{a,b}(z,w) - c_a(z) - c_b(w) - 1. \label{eq:bi-K-to-C}
\end{align}

\subsection{Bi-Free Partial $S$-Transforms}

Finally, with the above notation, we can define the bi-free partial $S$-transform and the opposite bi-free partial $S$-transform.  

\begin{defn}[\cite{V2016-2}*{Definition 2.1}, \cite{S2016-1}*{Proposition 4.2}]
Let $(a, b)$ be a two-faced pair in a non-commutative probability space $(\A, \varphi)$ with $\varphi(a) \neq 0$ and $\varphi(b) \neq 0$.  The \emph{$2$-variable partial bi-free $S$-transform of $(a, b)$} is the holomorphic function on $(\bC \setminus \{0\})^2$ near $(0,0)$ defined by
\begin{align*}
S_{a,b}(z, w) &= \frac{z+1}{z}\frac{w+1}{w} \left( 1-\frac{1+z+w}{H_{a,b}(\X_a(z), \X_b(w))}\right)\\
&= 1 + \frac{1+z+w}{zw} K_{a,b}\left( c^{\inv}_a(z),  c^{\inv}_b(w)\right)
\end{align*}
\end{defn}

\begin{defn}[\cite{HW2017}*{Definition 2.7}]
Let $(a, b)$ be a two-faced pair in a Banach non-commutative probability space $(\A, \varphi)$ with $\varphi(a) \neq 0$ and $\varphi(b) \neq 0$.  The opposite $2$-variable partial bi-free $S$-transform of $(a, b)$ is the holomorphic function on $(\bC \setminus \{0\})^2$ near $(0,0)$ defined by
\begin{align}
S^\op_{a,b}(z, w) = \frac{w(z+1)}{z(w+1)}\frac{H_{(a,b)}(\X_a(z), \X_b(w)) - (w+1)}{H_{(a,b)}(\X_a(z), \X_b(w)) - (z + 1)}. \label{eq:S-op}
\end{align}
\end{defn}

Note equation \eqref{eq:S-op} written slightly differently than in \cite{HW2017}*{Definition 2.7} but is the same formula.  However, we desire an alternative formula along the lines of \cite{S2016-1}*{Proposition 4.2}.  To simplify discussions, we will demonstrate the equality in the case $\varphi(a) = \varphi(b) = 1$. See Remark  \ref{rem:assume-unital} for why we can do this.

\begin{prop}
\label{prop:S-op-new-defn}
If $(a,b)$ is a two-faced pair in a Banach non-commutative probability space $(\A, \varphi)$ with $\varphi(a) = \varphi(b) =1$, then as holomorphic functions on $(\bC \setminus \{0\})^2$ near $(0,0)$,
\begin{align}
S^\op_{a,b}(z,w) = \frac{1 + \frac{1}{z} K_{a,b}\left( c^{\inv}_a(z),  c^{\inv}_b(w)\right)}{1 + \frac{1}{w} K_{a,b}\left( c^{\inv}_a(z),  c^{\inv}_b(w)\right)} =  \frac{1 + \frac{1}{z} K_{a,b}\left( zS_a(z),  wS_b(w)\right)}{1 + \frac{1}{w} K_{a,b}\left( zS(z),  wS_b(w)\right)}. \label{eq:my-S-op}
\end{align}
\end{prop}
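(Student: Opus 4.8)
The second equality in \eqref{eq:my-S-op} requires no work: by \eqref{eq:S-cumulants-form} we have $c^{\inv}_a(z) = zS_a(z)$ and $c^{\inv}_b(w) = wS_b(w)$, so the two $K_{a,b}$-expressions are literally the same function. Hence the entire content is the first equality, and my plan is to derive it from the definition \eqref{eq:S-op} by evaluating the master identity \eqref{eq:bi-moment-and-bi-cumulant} at the single point $(\X_a(z), \X_b(w))$. Before doing so I would record two scalar facts. From \eqref{eq:X-into-moment} we have $h_a(\X_a(z)) = 1+z$ and $h_b(\X_b(w)) = 1+w$. Moreover, since the two expressions \eqref{eq:S-X-form} and \eqref{eq:S-cumulants-form} for $S_a$ agree, $\tfrac{1+z}{z}\X_a(z) = \tfrac{1}{z}c^{\inv}_a(z)$, i.e. $(1+z)\X_a(z) = c^{\inv}_a(z)$, and likewise $(1+w)\X_b(w) = c^{\inv}_b(w)$. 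In particular the quantity $\X_a(z)\,h_a(\X_a(z)) = (1+z)\X_a(z)$ equals $c^{\inv}_a(z)$, and similarly on the right.

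Writing $H := H_{a,b}(\X_a(z),\X_b(w))$ and $K := K_{a,b}(c^{\inv}_a(z), c^{\inv}_b(w))$ for brevity, I would substitute $z \mapsto \X_a(z)$ and $w \mapsto \X_b(w)$ into \eqref{eq:bi-moment-and-bi-cumulant}. The left-hand side becomes $(1+z)+(1+w)$, the term $h_a h_b / H_{a,b}$ becomes $(1+z)(1+w)/H$, and — this is the crucial simplification — the arguments of $C_{a,b}$ collapse to exactly $c^{\inv}_a(z)$ and $c^{\inv}_b(w)$ by the preceding paragraph. Thus
\[
(1+z)+(1+w) = \frac{(1+z)(1+w)}{H} + C_{a,b}\!\left(c^{\inv}_a(z), c^{\inv}_b(w)\right).
\]
Converting $C_{a,b}$ to $K_{a,b}$ through \eqref{eq:bi-K-to-C} and using $c_a(c^{\inv}_a(z)) = z$, $c_b(c^{\inv}_b(w)) = w$ gives $C_{a,b}(c^{\inv}_a(z), c^{\inv}_b(w)) = K + z + w + 1$. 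Cancelling the common $z+w$ then leaves the single linear relation $1 - K = (1+z)(1+w)/H$, i.e.
\[
H = \frac{(1+z)(1+w)}{1-K}.
\]

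The remainder is a short substitution into \eqref{eq:S-op}. I would compute $H-(1+w) = (1+w)\tfrac{z+K}{1-K}$ and $H-(1+z) = (1+z)\tfrac{w+K}{1-K}$, so that the factors $(1+z)$, $(1+w)$, and $(1-K)$ cancel against the prefactor $\tfrac{w(z+1)}{z(w+1)}$, yielding
\[
S^\op_{a,b}(z,w) = \frac{w}{z}\cdot\frac{z+K}{w+K} = \frac{1 + \frac{1}{z}K}{1 + \frac{1}{w}K},
\]
which is the asserted formula. I do not expect a genuine obstacle here: once the arguments of $C_{a,b}$ are recognized to reduce to $c^{\inv}_a(z)$ and $c^{\inv}_b(w)$, everything is forced, and the only substantive ingredient is the identity $(1+z)\X_a(z) = c^{\inv}_a(z)$ coming from the equivalence of the two scalar $S$-transform definitions. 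The sole remaining point of care is checking that $(0,0)$ lies in the common domain of all the composed series so that the equalities hold as identities of holomorphic functions; this is guaranteed by the convergence discussions in Remarks \ref{rem:convergence-free} and \ref{rem:convergence-bi-free} together with $\varphi(a)=\varphi(b)=1\neq 0$, which makes $\psi_a$, $c_a$ (and their $b$-analogues) invertible near $0$.
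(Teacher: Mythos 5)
Your proposal is correct and follows essentially the same route as the paper's proof: both evaluate the moment--cumulant identity \eqref{eq:bi-moment-and-bi-cumulant} at $(\X_a(z),\X_b(w))$, use $(1+z)\X_a(z)=c^{\inv}_a(z)$ (equivalence of \eqref{eq:S-X-form} and \eqref{eq:S-cumulants-form}) to collapse the arguments of $C_{a,b}$, convert $C_{a,b}$ to $K_{a,b}$ via \eqref{eq:bi-K-to-C}, and substitute into \eqref{eq:S-op}. The only difference is cosmetic --- you solve for $H=\frac{(1+z)(1+w)}{1-K}$ and substitute, while the paper works with $\frac{1}{H}$ directly --- and both yield the same cancellation.
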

\begin{proof}
First note that the two expression for $S^\op_{a,b}(z,w)$ in equation \eqref{eq:my-S-op} are equal by equation \eqref{eq:S-cumulants-form}.  Furthermore, these expressions are well defined.  Indeed, as $\varphi(a) = \varphi(b) = 1$, if $(z, w)$ are sufficiently near $(0, 0)$ then $c^{\inv}_a(z) = z + O(z^2)$ and $c^{\inv}_b(w) = w + O(w^2)$ and thus $\frac{1}{z} K_{a,b}\left( c^{\inv}_a(z),  c^{\inv}_b(w)\right)$ and $\frac{1}{w} K_{a,b}\left( c^{\inv}_a(z),  c^{\inv}_b(w)\right)$ are well-defined and $1 + \frac{1}{w} K_{a,b}\left( c^{\inv}_a(z),  c^{\inv}_b(w)\right) \neq 0$ for sufficiently small $(z,w)$.  

When $(z,w)$ is sufficiently near $(0, 0$), $(\X_a(z), \X_b(w))$ is near $(0, 0)$ and thus $H_{(a,b)}(\X_a(z), \X_b(w))$ is near 1 and invertible.  Hence we obtain from equation \eqref{eq:S-op} that
\[
S^\op_{a,b}(z, w) = \frac{w(z+1)}{z(w+1)}\left(\frac{1  - \frac{w+1}{H_{(a,b)}(\X_a(z), \X_b(w))}}{1  - \frac{z + 1}{H_{(a,b)}(\X_a(z), \X_b(w))}}\right).
\]
Using equations (\ref{eq:X-into-moment}, \ref{eq:S-X-form}, \ref{eq:S-cumulants-form}, \ref{eq:bi-moment-and-bi-cumulant}), we obtain that
\begin{align*}
\frac{1}{H_{a,b}\left(\X_a(z), \X_b(w)\right)} = \frac{1}{1+z} + \frac{1}{1+w} -\frac{1}{1+z}\frac{1}{1+w}C_{a,b}\left(c^{\inv}_a(z), c^{\inv}_b(w)\right).
\end{align*}
Therefore, using equations \eqref{eq:bi-K-to-C}, we obtain that
\begin{align*}
S^\op_{a,b}(z, w) &= \frac{w(z+1)}{z(w+1)}\left(\frac{1 - (w+1)\left[\frac{1}{1+z} + \frac{1}{1+w} -\frac{1}{1+z}\frac{1}{1+w}C_{a,b}\left(c^{\inv}_a(z), c^{\inv}_b(w)\right)\right]}{1 - (z+1)\left[\frac{1}{1+z} + \frac{1}{1+w} -\frac{1}{1+z}\frac{1}{1+w}C_{a,b}\left(c^{\inv}_a(z), c^{\inv}_b(w)\right)\right]}\right) \\
&=\frac{w\left[(z+1) - (w+1) - (z+1) + C_{a,b}\left(c^{\inv}_a(z), c^{\inv}_b(w)\right)\right]}{z\left[(w+1) - (z+1) - (w+1) + C_{a,b}\left(c^{\inv}_a(z), c^{\inv}_b(w)\right)\right]}   \\
&=\frac{w\left[(z+1) - (w+1) - (z+1) + \left(1 + z + w + K_{a,b}\left( c^{\inv}_a(z),  c^{\inv}_b(w)\right)\right)\right]}{z\left[(w+1) - (z+1) - (w+1) + \left(1 + z + w + K_{a,b}\left( c^{\inv}_a(z),  c^{\inv}_b(w)\right)\right)\right]}\\
&=\frac{w\left[z + K_{a,b}\left( c^{\inv}_a(z),  c^{\inv}_b(w)\right)\right]}{z\left[ w + K_{a,b}\left( c^{\inv}_a(z),  c^{\inv}_b(w)\right)\right]}\\ 
&= \frac{1 + \frac{1}{z} K_{a,b}\left( c^{\inv}_a(z),  c^{\inv}_b(w)\right)}{1 + \frac{1}{w} K_{a,b}\left( c^{\inv}_a(z),  c^{\inv}_b(w)\right)}. \qedhere
\end{align*}
\end{proof}

\begin{rem}
\label{rem:assume-unital}
One might be concerned that we have restricted to the case $\varphi(a) = \varphi(b) = 1$.  However, if we use equation (\ref{eq:my-S-op}) as the definition of the opposite bi-free partial $S$-transform and if $\lambda, \mu \in \bC \setminus \{0\}$, then $S^{\op}_{\lambda a,\mu b}(z,w) = S^{\op}_{a, b}(z,w)$.  Indeed $c_{\lambda a}(z) = c_a(\lambda z)$ so $c_{\lambda a}^{\inv}(z) = \frac{1}{\lambda} c^{\inv}_a(z)$ and similarly $c_{\mu b}^{\inv}(w) = \frac{1}{\mu} c^{\inv}_b(w)$.  Therefore, since $\kappa_{n,m}(\lambda a, \mu b) = \lambda^n \mu^m \kappa_{n,m}(a,b)$, we see that 
\[
K_{\lambda a,\mu b}\left(c^{\inv}_{\lambda a}(z),  c^{\inv}_{\mu b}(w) \right) = K_{a,b}\left(c^{\inv}_{a}(z),  c^{\inv}_b(w) \right).
\]
Thus there is no loss in assuming $\varphi(a) = \varphi(b) = 1$.
\end{rem}

\begin{rem}
Note Proposition \ref{prop:S-op-new-defn} easily describes the bi-free partial $S^\op$-transform when $a$ and $b$ are independent; that is, when $\varphi(a^nb^m) = \varphi(a^n)\varphi(b^m)$ for all $n,m \geq 0$.  Indeed, in this setting $\kappa_{n,m}(a,b) = 0$ for all $n,m\geq 1$ (see \cite{S2016-2}*{Section 3.2}).  Hence $K_{a,b}(z,w) = 0$ so $S^\op_{a,b}(z,w) = 1$.  This agrees with the value of $S_{a,b}(z,w)$ when $a$ and $b$ are independent (see  \cite{V2016-2}*{Proposition 4.2} or \cite{S2016-1}*{Remark 4.4}).
\end{rem}

\begin{rem}
It is also curious to note that $\frac{1}{S^\op_{(a,b)}(z,w)}$ is well defined and would be an equivalent definition of the opposite bi-free partial $S$-transform.  Moreover, $S_{a,b}(z,w)$ and $S^\op_{a,b}(z,w)$ behave very differently with respect to interchanging $a$ and $b$.  Indeed it is elementary to see that
\[
S_{b,a}(z,w) = S_{a,b}(w,z) \qquad \text{whereas} \qquad S^\op_{b,a}(z,w) =  \frac{1}{S^\op_{a,b}(w,z)}.
\]
\end{rem}

Of course our goal is to prove the following desired property for $S^\op_{a,b}(z,w)$ using techniques similar to \cite{S2016-1}*{Theorem 4.5}.

\begin{thm}[\cite{HW2017}*{Proposition 2.8}]
\label{thm:S-op-property}
Let $(a_1, b_1)$ and $(a_2, b_2)$ be bi-free two-faced pairs in a Banach non-commutative probability space $(\A, \varphi)$ with $\varphi(a_k) \neq 0$ and $\varphi(b_k) \neq 0$.  Then
\[
S^{\op}_{a_1a_2, b_2b_1}(z,w) = S^\op_{a_1, b_1}(z,w) S^\op_{a_2, b_2}(z,w)
\]
on $(\bC \setminus \{0\})^2$ near $(0,0)$.
\end{thm}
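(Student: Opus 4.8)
The plan is to push everything through the cumulant series $K$ using the alternative formula of Proposition \ref{prop:S-op-new-defn}, and then to feed in the combinatorial description of the mixed cumulants of products. First I would normalize: by Remark \ref{rem:assume-unital} both sides are unchanged under the rescalings $a_k \mapsto \lambda_k a_k$ and $b_k \mapsto \mu_k b_k$, so I may assume $\varphi(a_k) = \varphi(b_k) = 1$ for $k = 1,2$. Since bi-freeness of $(a_1,b_1)$ and $(a_2,b_2)$ makes $a_1, a_2$ free and $b_1, b_2$ free, this also forces $\varphi(a_1a_2) = \varphi(b_2b_1) = 1$, so all three pairs are normalized and Proposition \ref{prop:S-op-new-defn} applies to each. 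Writing $K^{(12)} = K_{a_1a_2, b_2b_1}\!\left(c^{\inv}_{a_1a_2}(z), c^{\inv}_{b_2b_1}(w)\right)$ and $K^{(k)} = K_{a_k,b_k}\!\left(c^{\inv}_{a_k}(z), c^{\inv}_{b_k}(w)\right)$, and substituting the intermediate form $S^\op_{a,b}(z,w) = \frac{w(z+K)}{z(w+K)}$ obtained in the proof of Proposition \ref{prop:S-op-new-defn}, the asserted identity becomes
\[
\frac{w\,(z + K^{(12)})}{z\,(w + K^{(12)})} = \frac{w\,(z + K^{(1)})}{z\,(w + K^{(1)})}\cdot\frac{w\,(z + K^{(2)})}{z\,(w + K^{(2)})}.
\]

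The combinatorial heart of the argument is Lemma \ref{lem:K-expression}. I would expand each left entry $a_1a_2$ into the adjacent pair of left nodes $(a_1, a_2)$ and each right entry $b_2b_1$ into the adjacent pair of right nodes $(b_2, b_1)$, and then apply the bi-free formula for $(\ell,r)$-cumulants whose arguments are products. Bi-freeness forces every block of each surviving bi-non-crossing partition to be monochromatic in the index $k \in \{1,2\}$, so each such partition splits into an index-$1$ part and an index-$2$ part. The \emph{opposite} placement $b_2b_1$ (rather than $b_1b_2$) is precisely what causes these two parts to interleave in the free-probability-like fashion of \cite{CNS2015-2}*{Theorem 5.2.1}, and collecting the contributions yields the recursively intertwined expression for $K_{a_1a_2, b_2b_1}$ recorded in Lemma \ref{lem:K-expression} in terms of $K_{a_1,b_1}$, $K_{a_2,b_2}$, and the one-sided series $c_{a_k}$, $c_{b_k}$.

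With Lemma \ref{lem:K-expression} in hand, the remaining work is the rearrangement that eluded \cite{S2016-1}. The essential auxiliary input is the multiplicativity of the ordinary free $S$-transform, $S_{a_1a_2} = S_{a_1}S_{a_2}$ and $S_{b_2b_1} = S_{b_1}S_{b_2}$, which by equation \eqref{eq:S-cumulants-form} reads $c^{\inv}_{a_1a_2}(z) = z\,S_{a_1}(z)S_{a_2}(z)$ and similarly on the right. I would use these to reconcile the mismatched arguments: the argument $c^{\inv}_{a_1a_2}(z)$ entering $K^{(12)}$ is the individual argument $c^{\inv}_{a_1}(z) = z\,S_{a_1}(z)$ scaled by the other factor's free $S$-transform, and likewise for the right variables. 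Substituting the expression of Lemma \ref{lem:K-expression}, performing this change of variables in each $K$, clearing the denominators $z(w+K^{(\cdot)})$, and collecting terms should reduce the displayed product formula to an identity among power series.

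I expect the main obstacle to be exactly this rearrangement. Lemma \ref{lem:K-expression} delivers $K^{(12)}$ only in a recursively intertwined form, and matching it against $S^\op_{a_1,b_1}S^\op_{a_2,b_2}$ demands both the correct change of variables in the arguments of each $K$ and a delicate cancellation. As the introduction notes, the obstruction in \cite{S2016-1} was never the combinatorics but precisely this reshuffling; knowing the closed-form target from \eqref{eq:S-op} and Proposition \ref{prop:S-op-new-defn} is what makes the reshuffling tractable, since it tells us in advance the shape into which the recursive expression must be massaged.
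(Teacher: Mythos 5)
Your proposal is correct and follows essentially the same route as the paper's own proof: normalize via Remark \ref{rem:assume-unital}, use Proposition \ref{prop:S-op-new-defn} to restate the claim (in your notation) as $\frac{w(z+K^{(12)})}{z(w+K^{(12)})}=\frac{w(z+K^{(1)})}{z(w+K^{(1)})}\cdot\frac{w(z+K^{(2)})}{z(w+K^{(2)})}$, invoke Lemma \ref{lem:K-expression}, and finish with algebra --- which indeed goes through, since the lemma yields $z+K^{(12)}=\frac{1}{z}\left(z+Q_1\right)\left(z+Q_2\right)\big/\big(1-\tfrac{1}{zw}Q_1Q_2\big)$ and its $w$-analogue (the paper instead multiplies by $\frac{1}{z}-\frac{1}{w}$ and factors, an equivalent manipulation followed by holomorphic extension). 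The one inaccuracy is your proposed ``change of variables'' via $c^{\inv}_{a_1a_2}(z)=z\,S_{a_1}(z)S_{a_2}(z)$ \emph{after} invoking the lemma: no such reconciliation is needed, because Lemma \ref{lem:K-expression} already expresses $K^{(12)}$ in terms of $Q_k=K_{a_k,b_k}\big(c^{\inv}_{a_k}(z),c^{\inv}_{b_k}(w)\big)$, which are exactly the quantities appearing in $S^{\op}_{a_k,b_k}(z,w)$; the multiplicativity relation \eqref{eq:convolution-with-inverse-series} is used \emph{inside} the proof of that lemma (see Lemma \ref{lem:5}), not after it.
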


\section{The Tools}
\label{sec:Tools}

Before we can prove Theorem \ref{thm:S-op-property}, we require some technical tools that are used in the proof.

\subsection{Multiplicative Functions and Convolutions}

Recall $NC(n)$ denotes the lattice of non-crossing partitions on $\{1,\ldots, n\}$ with its usual reverse refinement order.  Let $0_n$ to denote the minimal element of $NC(n)$ and let $1_n$ to denote the maximal element of $NC(n)$ as before.  For $\pi, \sigma \in NC(n)$ with $\pi \leq \sigma$, the interval between $\pi$ and $\sigma$, denoted $[\pi, \sigma]$, is the set
\[
[\pi, \sigma] = \{ \rho \in NC(n) \, \mid \, \pi \leq \rho \leq \sigma\}.
\]
The \emph{incidence algebra of non-crossing partitions}, denoted $\I(NC)$, is the algebra of all functions
\[
f : \bigcup_{n\geq 1} NC(n) \times NC(n) \to \bC
\]
such that $f(\pi, \sigma) = 0$ unless $\pi \leq \sigma$, equipped with pointwise addition and a convolution product defined by
\[
(f \ast g)(\pi, \sigma) = \sum_{\rho \in [\pi, \sigma]} f(\pi, \rho) g(\rho, \sigma).
\]

A procedure described in \cite{S1994} demonstrates a method for decomposing each interval $[\pi, \sigma]$ into a product of full partitions of the form
\[
[0_1, 1_1]^{k_1} \times [0_2, 1_2]^{k_2} \times [0_3, 1_3]^{k_3} \times \cdots
\]
where $k_j \geq 0$.  Consequently, there are special elements of the incidence algebra.  Recall $f \in \I(NC)$ is called \emph{multiplicative} if whenever $[\pi, \sigma]$ has a canonical decomposition $[0_1, 1_1]^{k_1} \times [0_2, 1_2]^{k_2} \times [0_3, 1_3]^{k_3} \times \cdots$, then
\[
f(\pi, \sigma) = f(0_1, 1_1)^{k_1} f(0_2, 1_2)^{k_2} f(0_3, 1_3)^{k_3} \cdots.
\]
Thus the value of a multiplicative function $f$ on any pair of non-crossing partitions is completely determined by the values of $f$ on full non-crossing partition lattices.  We will denote the set of all multiplicative functions by $\M$ and the set all multiplicative functions $f$ with $f(0_1, 1_1) = 1$ by $\M_1$.  If $f, g \in \M$, one can verify that $f \ast g = g \ast f$.  Furthermore, there is a nicer expression for convolution of multiplicative functions.  Given a non-crossing partition $\pi \in NC(n)$, the \emph{Kreweras complement of $\pi$}, denoted $K(\pi)$, is the non-crossing partition on $\{1, \ldots, n\}$ with non-crossing diagram obtained by drawing $\pi$ via the standard non-crossing diagram on $\{1,\ldots, n\}$, placing nodes $1', 2', \ldots, n'$ with $k'$ directly to the right of $k$, and drawing the largest non-crossing partition on $1', 2', \ldots, n'$ that does not intersect $\pi$, which is then $K(\pi)$.  The following diagram exhibits that if $\pi = \{\{1,4\}, \{2,3\}, \{5\}, \{6, 7\}\}$, then $K(\pi) = \{\{1,3\}, \{2\}, \{4,5,7\}, \{6\}\}$.
\begin{align*}
	\begin{tikzpicture}[baseline]
	\draw[thick] (1,0) -- (1,1) -- (4,1) -- (4,0);
	\draw[thick] (2,0) -- (2,.5) -- (3,.5) -- (3,0);
	\draw[thick] (6,0) -- (6,.5) -- (7,.5) -- (7,0);
	\draw[thick,ggreen] (4.5,0) -- (4.5,1) -- (7.5,1)--(7.5, 0);
	\draw[thick,ggreen] (5.5,0) -- (5.5,1);
	\draw[thick,ggreen] (1.5,0) -- (1.5,.75) -- (3.5,.75)--(3.5, 0);
	\draw[thick, dashed] (0.5,0) -- (8,0);
	\node[below] at (1, 0) {1};
	\draw[fill=black] (1,0) circle (0.05);
	\node[below] at (2, 0) {2};
	\draw[fill=black] (2,0) circle (0.05);
	\node[below] at (3, 0) {3};
	\draw[fill=black] (3,0) circle (0.05);
	\node[below] at (4, 0) {4};
	\draw[fill=black] (4,0) circle (0.05);
	\node[below] at (5, 0) {5};
	\draw[fill=black] (5,0) circle (0.05);
	\node[below] at (6, 0) {6};
	\draw[fill=black] (6,0) circle (0.05);
	\node[below] at (7, 0) {7};
	\draw[fill=black] (7,0) circle (0.05);
	\node[below] at (1.5, 0) {$1'$};
	\draw[ggreen, fill=ggreen] (1.5,0) circle (0.05);
	\node[below] at (2.5, 0) {$2'$};
	\draw[ggreen, fill=ggreen] (2.5,0) circle (0.05);
	\node[below] at (3.5, 0) {$3'$};
	\draw[ggreen, fill=ggreen] (3.5,0) circle (0.05);
	\node[below] at (4.5, 0) {$4'$};
	\draw[ggreen, fill=ggreen] (4.5,0) circle (0.05);
	\node[below] at (5.5, 0) {$5'$};
	\draw[ggreen, fill=ggreen] (5.5,0) circle (0.05);
	\node[below] at (6.5, 0) {$6'$};
	\draw[ggreen, fill=ggreen] (6.5,0) circle (0.05);
	\node[below] at (7.5, 0) {$7'$};
	\draw[ggreen, fill=ggreen] (7.5,0) circle (0.05);
	\end{tikzpicture}
\end{align*}
For $f, g \in \M$, the convolution $f \ast g$ may be written as
\[
(f \ast g)(0_n, 1_n) = \sum_{\pi \in NC(n)} f(0_n, \pi) g(0_n, K(\pi)).
\]
Note \cite{NS1997} demonstrated that if $a_1, a_2 \in \A$ are free and if $f_k$ is the multiplicative function associated to the cumulants of $a_k$ defined by $f_k(0_n, 1_n) = \kappa_n(a_k)$, then $\kappa_n(a_1a_2) = \kappa_n(a_2a_1) = (f_1 \ast f_2)(0_n, 1_n)$.  Furthermore, for $\pi \in NC(n)$ with blocks $\{V_k\}^m_{k=1}$, $f_k(0_n, \pi) = \kappa_\pi(a_k) = \prod^m_{k=1} \kappa_{|V_k|}(a_k)$.

We will need another convolution product on $\M_1$ from \cite{NS1997}.  Let $NC'(n)$ denote all non-crossing partitions $\pi$ on $\{1,\ldots, n\}$ such that $\{1\}$ is a block in $\pi$. It is not difficult to construct an natural isomorphism between $NC'(n)$ and $NC(n-1)$.  The following diagrams illustrate all elements $NC'(4)$, together with their Kreweras complements.
\begin{align*}
	\begin{tikzpicture}[baseline]
	\draw[thick,ggreen] (1.5,0) -- (1.5,1) -- (4.5,1)--(4.5, 0);
	\draw[thick,ggreen] (2.5,0) -- (2.5,1);
	\draw[thick,ggreen] (3.5,0) -- (3.5,1);
	\draw[thick, dashed] (0.5,0) -- (5,0);
	\node[below] at (1, 0) {1};
	\draw[fill=black] (1,0) circle (0.05);
	\node[below] at (2, 0) {2};
	\draw[fill=black] (2,0) circle (0.05);
	\node[below] at (3, 0) {3};
	\draw[fill=black] (3,0) circle (0.05);
	\node[below] at (4, 0) {4};
	\draw[fill=black] (4,0) circle (0.05);
	\node[below] at (1.5, 0) {$1'$};
	\draw[ggreen, fill=ggreen] (1.5,0) circle (0.05);
	\node[below] at (2.5, 0) {$2'$};
	\draw[ggreen, fill=ggreen] (2.5,0) circle (0.05);
	\node[below] at (3.5, 0) {$3'$};
	\draw[ggreen, fill=ggreen] (3.5,0) circle (0.05);
	\node[below] at (4.5, 0) {$4'$};
	\draw[ggreen, fill=ggreen] (4.5,0) circle (0.05);
	\end{tikzpicture} \quad
	\begin{tikzpicture}[baseline]
	\draw[thick,ggreen] (1.5,0) -- (1.5,1) -- (4.5,1)--(4.5, 0);
	\draw[thick,ggreen] (2.5,0) -- (2.5,1);
	\draw[thick,black] (3,0) -- (3, .75) -- (4, .75) -- (4,0);
	\draw[thick, dashed] (0.5,0) -- (5,0);
	\node[below] at (1, 0) {1};
	\draw[fill=black] (1,0) circle (0.05);
	\node[below] at (2, 0) {2};
	\draw[fill=black] (2,0) circle (0.05);
	\node[below] at (3, 0) {3};
	\draw[fill=black] (3,0) circle (0.05);
	\node[below] at (4, 0) {4};
	\draw[fill=black] (4,0) circle (0.05);
	\node[below] at (1.5, 0) {$1'$};
	\draw[ggreen, fill=ggreen] (1.5,0) circle (0.05);
	\node[below] at (2.5, 0) {$2'$};
	\draw[ggreen, fill=ggreen] (2.5,0) circle (0.05);
	\node[below] at (3.5, 0) {$3'$};
	\draw[ggreen, fill=ggreen] (3.5,0) circle (0.05);
	\node[below] at (4.5, 0) {$4'$};
	\draw[ggreen, fill=ggreen] (4.5,0) circle (0.05);
	\end{tikzpicture}
	\quad
	\begin{tikzpicture}[baseline]
	\draw[thick,ggreen] (1.5,0) -- (1.5,1) -- (4.5,1)--(4.5, 0);
	\draw[thick,black] (2,0) -- (2,.75) -- (3,.75) -- (3,0);
	\draw[thick,ggreen] (3.5,0) -- (3.5,1);
	\draw[thick, dashed] (0.5,0) -- (5,0);
	\node[below] at (1, 0) {1};
	\draw[fill=black] (1,0) circle (0.05);
	\node[below] at (2, 0) {2};
	\draw[fill=black] (2,0) circle (0.05);
	\node[below] at (3, 0) {3};
	\draw[fill=black] (3,0) circle (0.05);
	\node[below] at (4, 0) {4};
	\draw[fill=black] (4,0) circle (0.05);
	\node[below] at (1.5, 0) {$1'$};
	\draw[ggreen, fill=ggreen] (1.5,0) circle (0.05);
	\node[below] at (2.5, 0) {$2'$};
	\draw[ggreen, fill=ggreen] (2.5,0) circle (0.05);
	\node[below] at (3.5, 0) {$3'$};
	\draw[ggreen, fill=ggreen] (3.5,0) circle (0.05);
	\node[below] at (4.5, 0) {$4'$};
	\draw[ggreen, fill=ggreen] (4.5,0) circle (0.05);
	\end{tikzpicture}
\end{align*}
\begin{align*}
	\begin{tikzpicture}[baseline]
	\draw[thick,ggreen] (1.5,0) -- (1.5,1) -- (4.5,1)--(4.5, 0);
	\draw[thick,ggreen] (2.5,0) -- (2.5,.5) -- (3.5, .5) -- (3.5,0);
	\draw[thick,black] (2,0) -- (2,.75) -- (4, .75) -- (4,0);
	\draw[thick, dashed] (0.5,0) -- (5,0);
	\node[below] at (1, 0) {1};
	\draw[fill=black] (1,0) circle (0.05);
	\node[below] at (2, 0) {2};
	\draw[fill=black] (2,0) circle (0.05);
	\node[below] at (3, 0) {3};
	\draw[fill=black] (3,0) circle (0.05);
	\node[below] at (4, 0) {4};
	\draw[fill=black] (4,0) circle (0.05);
	\node[below] at (1.5, 0) {$1'$};
	\draw[ggreen, fill=ggreen] (1.5,0) circle (0.05);
	\node[below] at (2.5, 0) {$2'$};
	\draw[ggreen, fill=ggreen] (2.5,0) circle (0.05);
	\node[below] at (3.5, 0) {$3'$};
	\draw[ggreen, fill=ggreen] (3.5,0) circle (0.05);
	\node[below] at (4.5, 0) {$4'$};
	\draw[ggreen, fill=ggreen] (4.5,0) circle (0.05);
	\end{tikzpicture}
	\quad
	\begin{tikzpicture}[baseline]
	\draw[thick,ggreen] (1.5,0) -- (1.5,1) -- (4.5,1)--(4.5, 0);
	\draw[thick,black] (2,0) -- (2,.75) -- (4, .75) -- (4,0);
	\draw[thick,black] (3,0) -- (3,.75);
	\draw[thick, dashed] (0.5,0) -- (5,0);
	\node[below] at (1, 0) {1};
	\draw[fill=black] (1,0) circle (0.05);
	\node[below] at (2, 0) {2};
	\draw[fill=black] (2,0) circle (0.05);
	\node[below] at (3, 0) {3};
	\draw[fill=black] (3,0) circle (0.05);
	\node[below] at (4, 0) {4};
	\draw[fill=black] (4,0) circle (0.05);
	\node[below] at (1.5, 0) {$1'$};
	\draw[ggreen, fill=ggreen] (1.5,0) circle (0.05);
	\node[below] at (2.5, 0) {$2'$};
	\draw[ggreen, fill=ggreen] (2.5,0) circle (0.05);
	\node[below] at (3.5, 0) {$3'$};
	\draw[ggreen, fill=ggreen] (3.5,0) circle (0.05);
	\node[below] at (4.5, 0) {$4'$};
	\draw[ggreen, fill=ggreen] (4.5,0) circle (0.05);
	\end{tikzpicture}
\end{align*}
Notice if $\pi \in NC'(n)$ and $\sigma$ is the non-crossing partition on $\{1, 1', 2, 2', \ldots, n, n'\}$ (with the ordering being the order of listing) with blocks $\{k, k'\}$ for all $k$, then the only non-crossing partition $\tau$ on $\{1', \ldots, n'\}$ such that $\pi \cup \tau$ is non-crossing (under the ordering $1, 1', 2, 2', \ldots, n, n'$) and $(\pi \cup \tau) \vee \sigma = 1_{2n}$ is $\tau = K(\pi)$.  Indeed this result can be shown by induction.

For $f, g \in \M_1$ the \emph{pinched-convolution} of $f$ and $g$, denoted $f \check{\ast} g$, is the unique element of $\M_1$ such that
\[
(f \check{\ast} g)[0_n, 1_n] = \sum_{\pi \in NC'(n)} f(0_n, \pi) g(0_n, K(\pi)).
\]
The pinched-convolution product is not commutative on $\M_1$.

Given an element $f \in \M$, we define the formal power series
\[
\phi_f(z) := \sum_{n\geq 1} f(0_n, 1_n) z^n.
\]
Note if $f$ is the multiplicative function associated to the cumulants of $a$ defined by $f(0_n, 1_n) = \kappa_n(a)$, then $\phi_f(z) = c_a(z)$.  Several formulae involving $\phi_f(z)$ are developed in \cite{NS1997}.  In particular, \cite{NS1997}*{Proposition 2.3} demonstrates that if $f_1, f_2 \in \M_1$ then $\phi_{f_1}(\phi_{f_1 \check{\ast} f_2}(z)) = \phi_{f_1 \ast f_2}(z)$ and thus
\begin{align}
\phi_{f_1 \check{\ast} f_2}\left(\phi^{\inv}_{f_1 \ast f_2}(z)\right) = \phi^{\inv}_{f_1}(z).  \label{eq:pinch-convolution-technicality}
\end{align}
Furthermore, \cite{NS1997}*{Theorem 1.6} demonstrates that
\begin{align}
z \cdot \phi^{\inv}_{f_1 \ast f_2}(z) = \phi^{\inv}_{f_1}(z)\phi^{\inv}_{f_2}(z).  \label{eq:convolution-with-inverse-series}
\end{align}
Note that equation \eqref{eq:S-cumulants-form} is an immediate consequence of equation (\ref{eq:convolution-with-inverse-series}) when  $\varphi(a) = 1$.

\subsection{Bi-Free Kreweras Complements}
\label{subsec:Bi-Krew}

The one additional technicality required revolves around the Kreweras complement for bi-non-crossing partitions described in \cite{CNS2015-2}.  To begin, recall if $\chi : \{1,\ldots, n\} \to \{\ell, r\}$ is such that
\[
\chi^{-1}(\{\ell\}) = \{k_1 < k_2 < \cdots < k_j\} \qqand \chi^{-1}(\{r\}) = \{k_{j+1} > k_{j+2} > \cdots > k_n\},
\]
there is a corresponding permutation $s_\chi$ on $\{1,\ldots, n\}$ defined by $s_\chi(j) = k_j$.  Consequently, $\pi \in BNC(\chi)$ if and only if $s^{-1}_\chi \cdot \pi$ (the partition obtained by applying $s^{-1}_\chi$ to each element of each block) is non-crossing on $\{1,\ldots, n\}$.

\begin{defn}
\label{defn:Kreweras}
For all $n \in \bN$, $\chi : \{1,\ldots, n\} \to \{\ell, r\}$, and $\pi \in BNC(\chi)$, the \emph{Kreweras complement} of $\pi$, denoted $K(\pi)$, is the element of $BNC(\chi)$ obtained by applying $s_\chi$ to the Kreweras complement in $NC(n)$ of $s^{-1}_\chi \cdot \pi$; that is, $K(\pi) = s_\chi \cdot K(s^{-1}_\chi \cdot \pi)$.
\end{defn}

\begin{rem}
\label{rem:Kreweras-BNC}
Note that $K(\pi)$ may be obtained by taking the diagram corresponding to $\pi$, placing a node beneath each left node and above each right node of $\pi$, and drawing the largest bi-non-crossing diagram on the new nodes.  Indeed the following diagrams illustrate this procedure.
\begin{align*}
\begin{tikzpicture}[baseline]
	\draw[thick] (1.5, 2.75) -- (2.5, 2.75) -- (2.4, 2.65);
	\draw[thick] (2.5,2.75) -- (2.4, 2.85);
	\draw[thick, dashed] (-1,5.75) -- (-1,-.25) -- (1,-.25) -- (1,5.75);
	\draw[thick, black] (-1,4.5) -- (0, 4.5) -- (0, .5) -- (-1,.5);
	\draw[thick, black] (1,2) -- (0, 2);
	\node[right] at (1,5) {$1$};
	\draw[fill=black] (1,5) circle (0.05);
	\node[left] at (-1, 4.5) {$2$};
	\draw[fill=black] (-1,4.5) circle (0.05);
	\node[left] at (-1, 3.5) {$3$};
	\draw[fill=black] (-1,3.5) circle (0.05);;
	\node[right] at (1,2) {$4$};
	\draw[fill=black] (1,2) circle (0.05);
	\node[right] at (1,1) {$5$};
	\draw[fill=black] (1,1) circle (0.05);
	\node[left] at (-1,0.5) {$6$};
	\draw[fill=black] (-1,0.5) circle (0.05);
	\end{tikzpicture}
	\begin{tikzpicture}[baseline]
	\draw[thick] (1.5, 2.75) -- (2.5, 2.75) -- (2.4, 2.65);
	\draw[thick] (2.5,2.75) -- (2.4, 2.85);
	\draw[thick, dashed] (-1,5.75) -- (-1,-.25) -- (1,-.25) -- (1,5.75);
	\draw[thick, black] (-1,4.5) -- (0, 4.5) -- (0, .5) -- (-1,.5);
	\draw[thick, black] (1,2) -- (0, 2);
	\draw[thick, ggreen] (-1,4) -- (-.5, 4) -- (-.5, 3) -- (-1,3);
	\draw[thick, ggreen] (1,5.5) -- (.5, 5.5) -- (.5, 2.5) -- (1,2.5);
	\draw[thick, ggreen] (-1,0) -- (.5, 0) -- (.5, 1.5) -- (1,1.5);
	\node[right] at (1,5.5) {$1'$};
	\draw[ggreen, fill=ggreen] (1,5.5) circle (0.05);
	\node[right] at (1,5) {$1$};
	\draw[fill=black] (1,5) circle (0.05);
	\node[left] at (-1, 4.5) {$2$};
	\draw[fill=black] (-1,4.5) circle (0.05);
	\node[left] at (-1, 4) {$2'$};
	\draw[ggreen, fill=ggreen] (-1,4) circle (0.05);
	\node[left] at (-1, 3.5) {$3$};
	\draw[fill=black] (-1,3.5) circle (0.05);
	\node[left] at (-1, 3) {$3'$};
	\draw[ggreen, fill=ggreen] (-1,3) circle (0.05);
	\node[right] at (1,2.5) {$4'$};
	\draw[ggreen, fill=ggreen] (1,2.5) circle (0.05);
	\node[right] at (1,2) {$4$};
	\draw[fill=black] (1,2) circle (0.05);
	\node[right] at (1,1.5) {$5'$};
	\draw[ggreen, fill=ggreen] (1,1.5) circle (0.05);
	\node[right] at (1,1) {$5$};
	\draw[fill=black] (1,1) circle (0.05);
	\node[left] at (-1,0.5) {$6$};
	\draw[fill=black] (-1,0.5) circle (0.05);
	\node[left] at (-1,0) {$6'$};
	\draw[ggreen, fill=ggreen] (-1,0) circle (0.05);
	\end{tikzpicture}
	\begin{tikzpicture}[baseline]
	\draw[thick, dashed] (-1,5.75) -- (-1,-.25) -- (1,-.25) -- (1,5.75);
	\draw[thick, ggreen] (-1,4) -- (-.5, 4) -- (-.5, 3) -- (-1,3);
	\draw[thick, ggreen] (1,5.5) -- (.5, 5.5) -- (.5, 2.5) -- (1,2.5);
	\draw[thick, ggreen] (-1,0) -- (.5, 0) -- (.5, 1.5) -- (1,1.5);
	\node[right] at (1,5.5) {$1'$};
	\draw[ggreen, fill=ggreen] (1,5.5) circle (0.05);
	\node[left] at (-1, 4) {$2'$};
	\draw[ggreen, fill=ggreen] (-1,4) circle (0.05);
	\node[left] at (-1, 3) {$3'$};
	\draw[ggreen, fill=ggreen] (-1,3) circle (0.05);
	\node[right] at (1,2.5) {$4'$};
	\draw[ggreen, fill=ggreen] (1,2.5) circle (0.05);
	\node[right] at (1,1.5) {$5'$};
	\draw[ggreen, fill=ggreen] (1,1.5) circle (0.05);
	\node[left] at (-1,0) {$6'$};
	\draw[ggreen, fill=ggreen] (-1,0) circle (0.05);
	\end{tikzpicture}
\end{align*}
Furthermore, given a $\pi \in BNC(n,m)$, the above demonstrates a natural way to view the pair $(\pi, K(\pi))$ as an element of $BNC(2n, 2m)$.  Indeed if one applies the map 
\[
(1_\ell, \ldots, n_\ell, 1_r, \ldots, m_r) \mapsto (1_\ell, 3_\ell, \ldots, (2n-1)_\ell, 2_r, 4_r, \ldots, (2m)_r)
\]
to the blocks in $\pi$ and the map
\[
(1_\ell, \ldots, n_\ell, 1_r, \ldots, m_r) \mapsto (2_\ell, 4_\ell, \ldots, (2n)_\ell, 1_r, 3_r, \ldots, (2m-1)_r)
\]
to the blocks in $K(\pi)$, then $\pi \cup K(\pi)$ is an element of $BNC(2n, 2m)$.  Note an element $\sigma \in BNC(2n, 2m)$ can be obtain in this fashion if and only if $\sigma \vee \sigma_{n,m} = 1_{2n,2m}$ where 
\[
\sigma_{n,m} = \{\{(2i-1)_\ell, (2i)_\ell\}\}^n_{i=1} \cup \{\{(2j-1)_r, (2j)_r\}\}^m_{j=1}
\]
and $\sigma$ contains no blocks with a $(2p)_\theta$ and a $(2q-1)_\theta$ for any $p$, $q$, and $\theta \in \{\ell,r\}$, no blocks with a $(2p)_\ell$ and a $(2q)_r$ for any $p$ and $q$, and no blocks with a $(2p-1)_\ell$ and a $(2q-1)_r$ for any $p$ and $q$.
\end{rem}

The reason Kreweras complements of bi-non-crossing partitions are essential in this paper is the following.

\begin{thm}[\cite{CNS2015-2}*{Theorem 5.2.1}]
\label{thm:cumulants-and-Kreweras}
Let $(a_1, b_1)$ and $(a_2, b_2)$ be bi-free two-faced pairs in a non-commutative probability space $(\A, \varphi)$.  Then for all $n,m\geq 0$
\[
\kappa_{n,m}(a_1a_2, b_2b_1) = \sum_{\pi \in BNC(n,m)} \kappa_{\pi}(a_1, b_1) \kappa_{K(\pi)}(a_2, b_2).
\]
\end{thm}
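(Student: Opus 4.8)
The plan is to deduce the identity from the bi-free cumulants-with-products formula of \cite{CNS2015-1} and then to collapse the resulting sum using the diagrammatic characterization of the Kreweras complement in Remark \ref{rem:Kreweras-BNC}. Each left entry $a_1a_2$ and each right entry $b_2b_1$ is a product of two factors, one drawn from the first pair $(a_1,b_1)$ and one from the second pair $(a_2,b_2)$. Splitting every entry into its two factors doubles the index set, and the products formula expresses $\kappa_{n,m}(a_1a_2,b_2b_1)$ as the sum of $\kappa_\sigma$ evaluated on this $2(n+m)$-tuple, taken over all $\sigma \in BNC(2n,2m)$ with $\sigma \vee \sigma_{n,m} = 1_{2n,2m}$, where $\sigma_{n,m}$ is the partition of Remark \ref{rem:Kreweras-BNC} pairing the two sub-nodes coming from a common entry.

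The crucial bookkeeping, and the place where the opposite multiplication is forced, is to track where the factors $a_1,a_2,b_1,b_2$ sit among the doubled nodes. Because $s_\chi$ reads left variables top-to-bottom but right variables bottom-to-top, I would check that writing the left products as $a_1a_2$ and the right products in the \emph{opposite} order $b_2b_1$ places every first-pair factor in an odd left node or an even right node, and every second-pair factor in an even left node or an odd right node. This is exactly the interleaving encoded by the two relabelling maps in Remark \ref{rem:Kreweras-BNC}; had the right products been written in the same order as the left, the factors would have landed with the wrong parity and the argument below would fail.

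I would then invoke bi-freeness: since $(a_1,b_1)$ and $(a_2,b_2)$ are bi-free, any bi-free cumulant with a block mixing factors of the two pairs vanishes, so only those $\sigma$ whose blocks each lie entirely within one pair survive. Such a $\sigma$ splits as $\sigma = \sigma_1 \cup \sigma_2$ with $\sigma_1$ supported on the first-pair nodes and $\sigma_2$ on the second-pair nodes; by the previous paragraph these supports are precisely the images of the two maps in Remark \ref{rem:Kreweras-BNC}, so $\sigma_1$ and $\sigma_2$ correspond to partitions $\pi,\pi' \in BNC(n,m)$ whose factors contribute $\kappa_\pi(a_1,b_1)$ and $\kappa_{\pi'}(a_2,b_2)$. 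The characterization at the end of Remark \ref{rem:Kreweras-BNC} says that $\sigma_1 \cup \sigma_2$ lies in $BNC(2n,2m)$ with $\sigma \vee \sigma_{n,m}=1_{2n,2m}$ (and the required parity and side constraints) exactly when $\pi' = K(\pi)$, which collapses the sum to $\sum_{\pi \in BNC(n,m)} \kappa_\pi(a_1,b_1)\kappa_{K(\pi)}(a_2,b_2)$. The main obstacle I expect is the second paragraph: faithfully transporting the operator-level products through the $s_\chi$ ordering into a node-level interleaving for which bi-freeness yields exactly the first-pair/second-pair splitting matched to Remark \ref{rem:Kreweras-BNC}. The boundary cases $n=0$ or $m=0$ reduce to the classical free cumulants-with-products formula (using $\kappa_m(b_2b_1)=\kappa_m(b_1b_2)$) and give a convenient consistency check.
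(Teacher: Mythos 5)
The paper does not actually reprove this theorem---it is quoted directly from \cite{CNS2015-2}*{Theorem 5.2.1}---but your argument is correct and is essentially the proof given in that reference: the bi-free cumulants-with-products formula, the vanishing of mixed cumulants under bi-freeness, and the characterization of pairs $(\pi,K(\pi))$ inside $BNC(2n,2m)$ recorded in Remark \ref{rem:Kreweras-BNC}. Your parity bookkeeping (first-pair factors landing on odd-left/even-right nodes, second-pair factors on even-left/odd-right nodes, which is forced precisely by writing the right products in the opposite order $b_2b_1$) is exactly the point that makes the collapse of the sum work, and matches the conventions of Remark \ref{rem:Kreweras-BNC}.
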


\section{The Proof}
\label{sec:Proof}

To simplify the proof of Theorem \ref{thm:S-op-property}, we will assume via Remark \ref{rem:assume-unital} that $\varphi(a_k) = \varphi(b_k) = 1$.  Note $\varphi(a_1a_2) = \varphi(b_2b_1) = 1$ by freeness of the left algebras and of the right algebras in bi-free pairs.  Furthermore, we will let $f_k$ (respectively $g_k$) denote the multiplicative function associated to the cumulants of $a_k$ (respectively $b_k$) defined by $f_k(0_n, 1_n) = \kappa_n(a_k)$ (respectively $g_k(0_n, 1_n) = \kappa_n(b_k)$).  Recall if $f$ (respectively $g$) is the multiplicative function associated to the cumulants of $a_1a_2$ (respectively $b_2b_1$), then $f = f_1 \ast f_2$ (respectively $g = g_2 \ast g_1 = g_1 \ast g_2$).  Thus $\phi^{\inv}_{f}(z) = c^{\inv}_{a_1a_2}(z)$, $\phi^{\inv}_g(w) = c^{\inv}_{b_2b_1}(w)$, $\phi^{\inv}_{f_k}(z) = c^{\inv}_{a_k}(z)$, and $\phi^{\inv}_{g_k}(w) = c^{\inv}_{b_k}(w)$.   Note that $f, g, f_k, g_k \in \M_1$ by assumptions.

The majority of the proof of Theorem \ref{thm:S-op-property} is to show the following.
\begin{lem}
\label{lem:K-expression}
Let $(a_1, b_1)$ and $(a_2, b_2)$ be bi-free two-faced pairs in a non-commutative probability space $(\A, \varphi)$ with $\varphi(a_k) =1$ and $\varphi(b_k) =1$.  Then, on $(\bC \setminus \{0\})^2$ near $(0,0)$,
\begin{align}
K_{a_1a_2,b_2b_1}\left( c^{\inv}_{a_1a_2}(z),  c^{\inv}_{b_2b_1}(w)\right) = \frac{Q_1(z,w) + Q_2(z,w) + \left(\frac{1}{z} + \frac{1}{w}\right) Q_1(z,w)Q_2(z,w)}{1 - \frac{1}{zw}Q_1(z,w)Q_2(z,w)} \label{eq:main-technical-formula}
\end{align}
where
\begin{align}
Q_k(z,w) = K_{a_k,b_k}\left( c^{\inv}_{a_k}(z),  c^{\inv}_{b_k}(w)\right). \label{eq:Q-formula}
\end{align}
\end{lem}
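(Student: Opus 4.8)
The plan is to rewrite the left-hand side of \eqref{eq:main-technical-formula} as an explicit sum over bi-non-crossing partitions and then resum it into the stated rational expression. Set $R(z,w):=K_{a_1a_2,b_2b_1}\!\left(c^{\inv}_{a_1a_2}(z),c^{\inv}_{b_2b_1}(w)\right)$. First I would combine the definition of $K$ from \eqref{eq:bi-K-to-C} with Theorem \ref{thm:cumulants-and-Kreweras} to obtain
\[
R(z,w)=\sum_{n,m\geq 1}\ \sum_{\pi\in BNC(n,m)}\kappa_\pi(a_1,b_1)\,\kappa_{K(\pi)}(a_2,b_2)\,\bigl(c^{\inv}_{a_1a_2}(z)\bigr)^n\bigl(c^{\inv}_{b_2b_1}(w)\bigr)^m .
\]
The key input is \eqref{eq:convolution-with-inverse-series}: since the cumulant functions of $a_1a_2$ and $b_2b_1$ are $f_1\ast f_2$ and $g_1\ast g_2$, it gives $c^{\inv}_{a_1a_2}(z)=\tfrac1z\,c^{\inv}_{a_1}(z)\,c^{\inv}_{a_2}(z)$ and $c^{\inv}_{b_2b_1}(w)=\tfrac1w\,c^{\inv}_{b_1}(w)\,c^{\inv}_{b_2}(w)$. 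Writing $x_k=c^{\inv}_{a_k}(z)$ and $y_k=c^{\inv}_{b_k}(w)$, this pulls out a global factor $z^{-n}w^{-m}$ and leaves the weights $x_1^n y_1^m$ to be attached to the blocks of $\pi$ and $x_2^n y_2^m$ to the blocks of $K(\pi)$.

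Next I would interpret the $\pi$-sum geometrically through Remark \ref{rem:Kreweras-BNC}, viewing each pair $(\pi,K(\pi))$ as a single $\sigma\in BNC(2n,2m)$ satisfying $\sigma\vee\sigma_{n,m}=1_{2n,2m}$, whose odd-left/even-right nodes form $\pi$ (weights $x_1,y_1$, producing an $(a_1,b_1)$-cumulant) and whose even-left/odd-right nodes form $K(\pi)$ (weights $x_2,y_2$, producing an $(a_2,b_2)$-cumulant). Because $\varphi(a_k)=\varphi(b_k)=1$, every singleton block contributes the factor $1$; moreover a block containing only left (resp. right) nodes contributes a univariate cumulant of $a_k$ (resp. $b_k$), and summing such pure blocks along the spine collapses via $c_{a_k}(x_k)=z$ and $c_{b_k}(y_k)=w$ (the defining relation of $c^{\inv}_{a_k}$), which is what ultimately produces the bare powers of $z$ and $w$ in the answer. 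The blocks that genuinely survive are the mixed ones, and a single mixed level-$1$ block assembles exactly into a summand of $Q_1=K_{a_1,b_1}(x_1,y_1)$ while a mixed level-$2$ block assembles into a summand of $Q_2$, matching \eqref{eq:Q-formula}.

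The heart of the proof---and the step I expect to be the main obstacle---is to organize these surviving configurations into a resummable structure. Here the connectivity requirement $\sigma\vee\sigma_{n,m}=1_{2n,2m}$ forces the mixed level-$1$ and level-$2$ blocks to interlock in an alternating chain: I would decompose every admissible $\sigma$ into a sequence of ``irreducible two-level links,'' each of which pairs one mixed $Q_1$-block with one mixed $Q_2$-block and, because the two share their left and right columns (whence the shared $z^{-1},w^{-1}$ factors), carries the weight $\tfrac1{zw}Q_1Q_2$; the two ends of the chain may instead terminate in a single mixed block or a one-sided link, contributing the boundary weights $Q_1$, $Q_2$, $\tfrac1zQ_1Q_2$, or $\tfrac1wQ_1Q_2$. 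Summing the resulting geometric series in the link weight $\tfrac1{zw}Q_1Q_2$---equivalently, verifying the fixed-point identity
\[
R(z,w)=Q_1+Q_2+\Bigl(\tfrac1z+\tfrac1w\Bigr)Q_1Q_2+\tfrac1{zw}Q_1Q_2\,R(z,w)
\]
and solving it---yields \eqref{eq:main-technical-formula}. The delicate points are proving that the interlocking condition admits exactly this link-and-boundary decomposition with no overcounting, and that the shared column weights distribute precisely as the factors $z^{-1}$, $w^{-1}$, $z^{-1}w^{-1}$ above; the rooted non-crossing decompositions encoded by the pinched convolution \eqref{eq:pinch-convolution-technicality} are the natural device for making the peeling of each link rigorous.
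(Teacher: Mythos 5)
Your proposal follows essentially the same route as the paper's proof: start from Theorem \ref{thm:cumulants-and-Kreweras}, view each pair $(\pi,K(\pi))$ as a single element of $BNC(2n,2m)$ via Remark \ref{rem:Kreweras-BNC}, peel off the ``spine'' blocks whose resummed weights become summands of $Q_1$ or $Q_2$, resum the interlocked regions with the pinched convolution, and close a geometric series. Your fixed-point identity $R=Q_1+Q_2+(\tfrac1z+\tfrac1w)Q_1Q_2+\tfrac1{zw}Q_1Q_2R$ is algebraically equivalent to what the paper actually proves: the paper splits $BNC(n,m)$ into $BNC_L$ and $BNC_R$ according to the type of the top spine block, introduces the ``bottom'' generating functions $\psi_L,\psi_R$, and establishes a two-term mutual recursion (Lemmata \ref{lem:1}--\ref{lem:4}) which is then solved in Lemma \ref{lem:5}; your chain-of-links decomposition is precisely that recursion unrolled. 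Note that the recursion formulation is what makes your ``no overcounting'' worry tractable: each lemma only justifies removing one spine block at a time, re-identifying the leftover as an element of $BNC_{Lb}$ or $BNC_{Rb}$, rather than certifying a global decomposition in one step. The delicate points you defer are not minor: they are the bulk of the paper's argument.

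One concrete claim in your sketch is wrong and would fail if implemented literally: the pure-left (resp.\ pure-right) regions hanging off a spine block do \emph{not} collapse ``via $c_{a_k}(x_k)=z$,'' and they do not produce bare powers of $z$ or $w$. Each such region contains interlocked blocks of \emph{both} $\pi$ (weights from $a_1$) and $K(\pi)$ (weights from $a_2$) in a rooted arrangement; its resummation is $\sum_{d\geq 1}(f_1\check{\ast}f_2)(0_d,1_d)\,\zeta^d=\phi_{f_1\check{\ast}f_2}(\zeta)$, which evaluated at $\zeta=c^{\inv}_{a_1a_2}(z)$ equals $c^{\inv}_{a_1}(z)=x_1$ by \eqref{eq:pinch-convolution-technicality} --- not $z$. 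These region factors are exactly what complete the spine cumulant $\kappa_{t,s}(a_1,b_1)$ (which has only $t-1$ left regions and hence needs one compensating division, the $\frac{1}{\phi_{f_1\check{\ast}f_2}(z)}$ in Lemma \ref{lem:1}) into a genuine summand of $Q_1(z,w)=K_{a_1,b_1}(x_1,y_1)$. The bare factors $\tfrac1z,\tfrac1w,\tfrac1{zw}$ in the link and boundary weights come solely from \eqref{eq:convolution-with-inverse-series}, i.e.\ $c^{\inv}_{a_1a_2}(z)=\tfrac1z x_1x_2$, as you correctly say in your ``shared columns'' remark; so your pure-block sentence is inconsistent with your own accounting and must be replaced by the pinched-convolution computation, which is the content of Lemmata \ref{lem:1}--\ref{lem:4}.
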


Again note the expression in equation \eqref{eq:main-technical-formula} is well defined as if $(z,w)$ are near $(0,0)$ then $\frac{1}{zw}Q_k(z,w)$ is well defined and near 1.

To begin the proof of Lemma \ref{lem:K-expression}, recall
\[
K_{a_1a_2, b_2b_1}(z,w) = \sum_{n,m\geq 1} \kappa_{n,m}(a_1a_2, b_2b_1) z^n w^m.
\]
Therefore, since $(a_1, b_1)$ and $(a_2, b_2)$ are bi-free, Theorem \ref{thm:cumulants-and-Kreweras} implies that
\[
\kappa_{n,m}(a_1a_2, b_2b_1) = \sum_{ \pi \in BNC(n,m)} \kappa_\pi(a_1, b_1)\kappa_{K(\pi)}(a_2, b_2).
\]
It is necessary to analyze and decompose the pair $(\pi, K(\pi))$.

Given a $\pi \in BNC(n,m)$, we will view the pair $(\pi, K(\pi))$ as an element of $\sigma \in BNC(2n, 2m)$ as in Remark \ref{rem:Kreweras-BNC}.  Under this view, there are two possible cases: the block of $\pi$ containing $1_\ell$ contains a $(2j)_r$ for some $j$, or the block of $K(\pi)$ containing $1_r$ contains a $(2i)_\ell$ for some $i$.  Indeed if the block of $\pi$ containing $1_\ell$ does not contain a $(2j)_r$ for some $j$ and $i$ is the largest integer such that the block of $\pi$ containing $1_\ell$ also contains $(2i-1)_\ell$, then by the properties of the Kreweras complement the block of $K(\pi)$ containing $1_r$ contains $(2i)_\ell$.  Consequently, if
\begin{align*}
BNC_L(n,m) &= \{ \pi \in BNC(n,m) \, \mid \, \text{the block of $\pi$ containing $1_\ell$ contains a $(2j)_r$ for some $j$}\} \\
BNC_R(n,m) &= \{ \pi \in BNC(n,m) \, \mid \, \text{the block of $K(\pi)$ containing $1_r$ contains a $(2i)_\ell$ for some $k$}\}
\end{align*}
then $BNC(n,m)$ is the disjoint union of $BNC_L(n,m)$ and $BNC_R(n,m)$.  

In order to discuss and decompose elements of $BNC_L(n,m)$ and $BNC_R(n,m)$, it is necessary to discuss other bi-non-crossing partitions.  Given $n,m \geq 0$, let $\sigma_L$ denote the element of $BNC(2n+1, 2m)$ with blocks $\{\{(2i)_\ell, (2i+1)_\ell\}\}_{i=1}^{n} \cup \{\{(2j-1)_r, (2j)_r\}\}_{j=1}^{m}$ and let $\sigma_R$ denote the element of $BNC(2n, 2m+1)$ with blocks $\{\{(2i-1)_\ell, (2i)_\ell\}\}_{i=1}^{n} \cup \{\{(2j)_r, (2j+1)_r\}\}_{j=1}^{m}$.  Then, for all $n,m \geq 0$ let
\begin{align*}
BNC_{Lb}(2n+1, 2m) &= \left\{\pi \in BNC(2n+1, 2m) \, \left| \, \substack{ \pi \vee \sigma_L = 1_{2n+1, 2m} \text{ and no block of $\pi$ contains a} \\ (2k)_{\theta_1} \text{ and a } (2j-1)_{\theta_2} \text{ for any } \theta_1, \theta_2 \in \{l, r\}} \right. \right\} \\
BNC_{Rb}(2n, 2m+1) &= \left\{\pi \in BNC(2n, 2m+1) \, \left| \, \substack{ \pi \vee \sigma_R = 1_{2n, 2m+1} \text{ and no block of $\pi$ contains a} \\ (2k)_{\theta_1} \text{ and a } (2j-1)_{\theta_2} \text{ for any } \theta_1, \theta_2 \in \{l, r\}} \right. \right\}.
\end{align*}
As will be seen in the proofs which follow, elements of $BNC_{Lb}(2n'+1, 2m')$ are the `bottoms' of elements of $BNC_L(n,m)$ and elements of $BNC_{Rb}(2n', 2m'+1)$ are the `bottoms' of elements of $BNC_R(n,m)$.

Using these bi-non-crossing partitions, we define the following transformations:
\begin{align*}
\phi_L(z,w) &:= \sum_{n,m\geq 1} \sum_{\pi \in BNC_L(n,m)} \kappa_\pi(a_1, b_1)\kappa_{K(\pi)}(a_2, b_2) z^n w^m \\
\phi_R(z,w) &:= \sum_{n,m\geq 1} \sum_{\pi \in BNC_R(n,m)} \kappa_\pi(a_1, b_1)\kappa_{K(\pi)}(a_2, b_2) z^n w^m \\ 
\psi_L(z,w) &:= \sum_{n,m\geq 0}\sum_{\pi \in BNC_{Lb}(2n+1,2m)} \kappa_\pi(\underbrace{a_2, a_1, a_2, a_1 \ldots, a_2, a_1, a_2}_{a_1 \text{ occurs }n \text{ times}}, \underbrace{b_2, b_1, b_2, b_1, \ldots, b_2, b_1}_{b_1, b_2 \text{ occur }m \text{ times}} ) z^{n+1} w^m \\
\psi_R(z,w) &:= \sum_{n,m\geq 0}\sum_{\pi \in BNC_{Rb}(2n,2m+1)} \kappa_\pi(\underbrace{a_1, a_2, a_1, a_2 \ldots, a_1, a_2}_{a_1, a_2 \text{ occur }n \text{ times}}, \underbrace{b_1, b_2, b_1, b_2, \ldots, b_1, b_2, b_1}_{b_2 \text{ occurs }m \text{ times}} ) z^{n} w^{m+1}.
\end{align*}
Notice that each of the above is a well-defined holomorphic function near $(0,0)$ and that
\begin{align}
K_{a_1a_2, b_2b_1}(z,w) = \phi_L(z,w) + \phi_R(z,w) \label{eq:decompose-K}
\end{align}
by construction.  Thus to understand $K_{a_1a_2, b_2b_1}(z,w)$ we must understand $\phi_L(z,w)$ and $\phi_R(z,w)$. The reason we must consider $\psi_L(z,w)$ and $\psi_R(z,w)$ are the following.

\begin{lem}
\label{lem:1}
Under the above notation and assumptions,
\[
\phi_L(z,w) = \frac{1}{\phi_{f_1  \check{\ast} f_2}(z)} K_{a_1, b_1}\left(\phi_{f_1  \check{\ast} f_2}(z), \phi_{g_1  \check{\ast} g_2}(w)\right) \psi_L(z,w)
\]
as holomorphic functions near $(0,0)$.
\end{lem}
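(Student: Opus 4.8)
The plan is to expand $\phi_L(z,w)$ over $BNC_L(n,m)$ and to peel off the single block that distinguishes these partitions. Fix $\pi \in BNC_L(n,m)$ and view the pair $(\pi, K(\pi))$ as an element of $BNC(2n,2m)$ as in Remark \ref{rem:Kreweras-BNC}, so that along the linear bi-non-crossing order the nodes alternate between $\pi$-nodes (odd left, even right, coloured by $a_1,b_1$) and $K(\pi)$-nodes (even left, odd right, coloured by $a_2,b_2$). Let $B$ be the block of $\pi$ containing $1_\ell$; by the very definition of $BNC_L(n,m)$ the block $B$ reaches the right face, so it has some $p\geq 1$ left legs $1_\ell=\ell_1<\cdots<\ell_p$ and $q\geq 1$ right legs, and since $B$ is a block of $\pi$ it contributes the factor $\kappa_{p,q}(a_1,b_1)$ to $\kappa_\pi(a_1,b_1)$. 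I would then organize the entire sum defining $\phi_L$ according to the pair $(p,q)$ together with the positions of the legs of $B$.

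Next I would show that the legs of $B$ cut $(\pi,K(\pi))$ into mutually independent pieces. Because $B$ contains the first node and the whole diagram is bi-non-crossing, everything lying strictly between two consecutive legs of $B$ is enclosed and forms a self-contained sub-diagram, and likewise for the region lying below the last legs. Using the interleaving of $\pi$ and $K(\pi)$ from Remark \ref{rem:Kreweras-BNC}, each interior region on the left face consists of $\pi$- and $K(\pi)$-nodes that are forced to be mutual Kreweras complements, with a distinguished node pinned by the bounding leg of $B$; this is exactly the data summed by the pinched convolution $f_1 \check{\ast} f_2$ over $NC'(\cdot)$ (the bounding leg playing the role of the distinguished singleton), so one such region contributes $\phi_{f_1\check{\ast}f_2}(z)$ in the variable $z$ that tracks its $a_1$-nodes. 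The analogous statement on the right face, tracking $b_1$-nodes by $w$, produces $\phi_{g_1\check{\ast}g_2}(w)$ for each right region. Finally, the region sitting below all legs of $B$ is, after relabelling, precisely an element of $BNC_{Lb}$ carrying the alternating colouring $a_2,a_1,a_2,\dots$ on the left and $b_2,b_1,\dots$ on the right prescribed in the definition of $\psi_L$; collecting these contributions yields the factor $\psi_L(z,w)$.

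Assembling the pieces, each of the $q$ right legs carries an independent $\phi_{g_1\check{\ast}g_2}(w)$ while all but one of the left legs carries an independent $\phi_{f_1\check{\ast}f_2}(z)$, the topmost left leg's region being absorbed into the bottom structure $\psi_L$; this asymmetry, already visible in the $2n+1$ versus $2m$ indexing in the definition of $\psi_L$, is exactly what forces the prefactor $\frac{1}{\phi_{f_1\check{\ast}f_2}(z)}$. Summing over $p,q\geq 1$ then gives $\sum_{p,q\geq1}\kappa_{p,q}(a_1,b_1)\,\phi_{f_1\check{\ast}f_2}(z)^{p-1}\,\phi_{g_1\check{\ast}g_2}(w)^{q}\,\psi_L(z,w)$, which is precisely $\frac{1}{\phi_{f_1\check{\ast}f_2}(z)}K_{a_1,b_1}\!\left(\phi_{f_1\check{\ast}f_2}(z),\phi_{g_1\check{\ast}g_2}(w)\right)\psi_L(z,w)$ by the definition of $K_{a_1,b_1}$, as claimed.

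I expect the genuine difficulty to lie entirely in the decomposition step: proving that leg-cutting is a \emph{weight-preserving bijection} from $BNC_L(n,m)$ onto the product of the top block $B$, the interior pinched-convolution regions, and a single $BNC_{Lb}$ bottom. The most delicate part is identifying each interior region with the pinched convolution $f_1\check{\ast}f_2$ (respectively $g_1\check{\ast}g_2$), and in particular pinning down which node plays the $NC'$ singleton, correctly handling the off-by-one in the $\pi$- versus $K(\pi)$-node counts within a region, and verifying the distinguished status of the first leg that accounts for the $\frac{1}{\phi_{f_1\check{\ast}f_2}(z)}$ normalization. Once this bijection and the matching of colourings are established, the geometric summation above is purely formal, and an induction on the size of $B$ together with the characterizations of $BNC_{Lb}$ and $K(\pi)$ in Remark \ref{rem:Kreweras-BNC} should organize the argument cleanly.
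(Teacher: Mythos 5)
Your proposal is correct and is essentially the paper's own proof: both isolate the block of $\pi$ containing $1_\ell$ (which by definition of $BNC_L(n,m)$ reaches the right face and contributes $\kappa_{t,s}(a_1,b_1)$), cut the rest of the diagram into interior regions that are identified with the pinched convolutions $f_1\check{\ast}f_2$ and $g_1\check{\ast}g_2$ via $NC'$ and the Kreweras complement, and identify the bottom region with an element of $BNC_{Lb}$ so that summing produces $\psi_L(z,w)$, the left face yielding one fewer factor $\phi_{f_1\check{\ast}f_2}(z)$ than there are left legs, exactly as in the paper. One minor directional slip: it is the \emph{bottommost} (last) left leg whose adjoining region is absorbed into the bottom structure, not the topmost -- the region between $1_\ell$ and the next left leg is an ordinary interior region carrying a $\phi_{f_1\check{\ast}f_2}(z)$ factor.
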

\begin{proof}
First observe that dividing by $\phi_{f_1  \check{\ast} f_2}(z)$ is valid (e.g. see the end of the proof for how this division term occurs).  Furthermore all terms involved in proof involve absolutely summable series and thus can be rearranged in any order we desired.

For now fix $n,m \geq 1$ and a $\pi \in BNC_L(n,m)$.  View $(\pi, K(\pi))$ as an element of $\sigma \in BNC(2n, 2m)$ as in Remark \ref{rem:Kreweras-BNC}.  Below is two diagrams of such a $\pi$ with $n = 5$ and $m = 6$.  The left diagram illustrates how the bi-non-crossing partition is the union of $\pi$ and $K(\pi)$ via shading.  The diagram on the right highlights portions of the diagram used in the proof.  Note two nodes are connected to each other with a solid line if and only if lie in the same block of $\pi$.  Furthermore, due to properties of the Kreweras complement, one must be able travel from any one node to any another using a combination of solid  and dotted lines. Note we really should draw all of the left nodes above all of the right notes, but we do not do so in order to save space.

\begin{align*}
\begin{tikzpicture}[baseline]
	\draw[thick, dashed] (-1,5.75) -- (-1,-.25) -- (1,-.25) -- (1,5.75);
	\draw[thick, blue, densely dotted] (1, 5.5) -- (0.75, 5.25) -- (1, 5);
	\draw[thick, blue, densely dotted] (1, 4.5) -- (0.75, 4.25) -- (1, 4);
	\draw[thick, blue, densely dotted] (1, 3.5) -- (0.75, 3.25) -- (1, 3);
	\draw[thick, blue, densely dotted] (1, 2.5) -- (0.75, 2.25) -- (1, 2);
	\draw[thick, blue, densely dotted] (1, 1.5) -- (0.75, 1.25) -- (1, 1);
	\draw[thick, blue, densely dotted] (1, 0.5) -- (0.75, 0.25) -- (1, 0);
	\draw[thick, blue, densely dotted] (-1, 5.5) -- (-0.75, 5.25) -- (-1, 5);
	\draw[thick, blue, densely dotted] (-1, 4.5) -- (-0.75, 4.25) -- (-1, 4);
	\draw[thick, blue, densely dotted] (-1, 3.5) -- (-0.75, 3.25) -- (-1, 3);
	\draw[thick, blue, densely dotted] (-1, 2.5) -- (-0.75, 2.25) -- (-1, 2);
	\draw[thick, blue, densely dotted] (-1, 1.5) -- (-0.75, 1.25) -- (-1, 1);
	\draw[thick, ggreen] (-1,5.5) -- (0,5.5) -- (0,2.5) -- (-1,2.5);
	\draw[thick, ggreen] (1,4) -- (0,4);
	\draw[thick, ggreen] (1,3) -- (0,3);
	\draw[thick, black] (1,5.5) -- (0.25, 5.5) -- (0.25, 4.5) -- (1,4.5);
	\draw[thick, black] (-1,5) -- (-0.25, 5) -- (-0.25, 3) -- (-1,3);
	\draw[thick, ggreen] (-1,4.5) -- (-0.5, 4.5) -- (-0.5, 3.5) -- (-1,3.5);
	\draw[thick, black] (1,2.5) -- (0.25,2.5) -- (0.25,.5) -- (1,.5);
	\draw[thick, black] (-1,2) -- (0.25,2);
	\draw[thick, ggreen] (1,2) -- (0.5,2) -- (0.5,1) -- (1,1);
	\draw[thick, ggreen] (-1,1.5) -- (-.25,1.5) -- (-0.25,0) -- (1,0);
	\node[left] at (-1, 5.5) {$1_\ell$};
	\draw[ggreen, fill=ggreen] (-1,5.5) circle (0.05);
	\node[left] at (-1, 5) {$2_\ell$};
	\draw[fill=black] (-1,5) circle (0.05);
	\node[left] at (-1, 4.5) {$3_\ell$};
	\draw[ggreen, fill=ggreen] (-1,4.5) circle (0.05);
	\node[left] at (-1, 4) {$4_\ell$};
	\draw[fill=black] (-1,4) circle (0.05);
	\node[left] at (-1, 3.5) {$5_\ell$};
	\draw[ggreen, fill=ggreen] (-1,3.5) circle (0.05);
	\node[left] at (-1, 3) {$6_\ell$};
	\draw[fill=black] (-1,3) circle (0.05);
	\node[left] at (-1, 2.5) {$7_\ell$};
	\draw[ggreen, fill=ggreen] (-1,2.5) circle (0.05);
	\node[left] at (-1, 2) {$8_\ell$};
	\draw[fill=black] (-1,2) circle (0.05);
	\node[left] at (-1, 1.5) {$9_\ell$};
	\draw[ggreen, fill=ggreen] (-1,1.5) circle (0.05);
	\node[left] at (-1, 1) {$10_\ell$};
	\draw[fill=black] (-1,1) circle (0.05);
	\node[right] at (1,5.5) {$1_r$};
	\draw[fill=black] (1,5.5) circle (0.05);
	\node[right] at (1,5) {$2_r$};
	\draw[ggreen, fill=ggreen] (1,5) circle (0.05);
	\node[right] at (1,4.5) {$3_r$};
	\draw[fill=black] (1,4.5) circle (0.05);
	\node[right] at (1,4) {$4_r$};
	\draw[ggreen, fill=ggreen] (1,4) circle (0.05);
	\node[right] at (1,3.5) {$5_r$};
	\draw[fill=black] (1,3.5) circle (0.05);
	\node[right] at (1,3) {$6_r$};
	\draw[ggreen, fill=ggreen] (1,3) circle (0.05);
	\node[right] at (1,2.5) {$7_r$};
	\draw[fill=black] (1,2.5) circle (0.05);
	\node[right] at (1,2) {$8_r$};
	\draw[ggreen, fill=ggreen] (1,2) circle (0.05);
	\node[right] at (1,1.5) {$9_r$};
	\draw[fill=black] (1,1.5) circle (0.05);
	\node[right] at (1,1) {$10_r$};
	\draw[ggreen, fill=ggreen] (1,1) circle (0.05);
	\node[right] at (1,0.5) {$11_r$};
	\draw[fill=black] (1,0.5) circle (0.05);
	\node[right] at (1,0) {$12_r$};
	\draw[ggreen, fill=ggreen] (1,0) circle (0.05);
	\end{tikzpicture}
	\qquad
	\begin{tikzpicture}[baseline]
	\draw[thick, dashed] (-1,5.75) -- (-1,-.25) -- (1,-.25) -- (1,5.75);
	\draw[thick, blue, densely dotted] (1, 5.5) -- (0.75, 5.25) -- (1, 5);
	\draw[thick, blue, densely dotted] (1, 4.5) -- (0.75, 4.25) -- (1, 4);
	\draw[thick, blue, densely dotted] (1, 3.5) -- (0.75, 3.25) -- (1, 3);
	\draw[thick, blue, densely dotted] (1, 2.5) -- (0.75, 2.25) -- (1, 2);
	\draw[thick, blue, densely dotted] (1, 1.5) -- (0.75, 1.25) -- (1, 1);
	\draw[thick, blue, densely dotted] (1, 0.5) -- (0.75, 0.25) -- (1, 0);
	\draw[thick, blue, densely dotted] (-1, 5.5) -- (-0.75, 5.25) -- (-1, 5);
	\draw[thick, blue, densely dotted] (-1, 4.5) -- (-0.75, 4.25) -- (-1, 4);
	\draw[thick, blue, densely dotted] (-1, 3.5) -- (-0.75, 3.25) -- (-1, 3);
	\draw[thick, blue, densely dotted] (-1, 2.5) -- (-0.75, 2.25) -- (-1, 2);
	\draw[thick, blue, densely dotted] (-1, 1.5) -- (-0.75, 1.25) -- (-1, 1);
	\draw[thick, ggreen] (-1,5.5) -- (0,5.5) -- (0,2.5) -- (-1,2.5);
	\draw[thick, ggreen] (1,4) -- (0,4);
	\draw[thick, ggreen] (1,3) -- (0,3);
	\draw[thick, black] (1,5.5) -- (0.25, 5.5) -- (0.25, 4.5) -- (1,4.5);
	\draw[thick, black] (-1,5) -- (-0.25, 5) -- (-0.25, 3) -- (-1,3);
	\draw[thick, black] (-1,4.5) -- (-0.5, 4.5) -- (-0.5, 3.5) -- (-1,3.5);
	\draw[thick, red] (1,2.5) -- (0.25,2.5) -- (0.25,.5) -- (1,.5);
	\draw[thick, red] (-1,2) -- (0.25,2);
	\draw[thick, red] (1,2) -- (0.5,2) -- (0.5,1) -- (1,1);
	\draw[thick, red] (-1,1.5) -- (-.25,1.5) -- (-0.25,0) -- (1,0);
	\node[left] at (-1, 5.5) {$1_\ell$};
	\draw[ggreen, fill=ggreen] (-1,5.5) circle (0.05);
	\node[left] at (-1, 5) {$2_\ell$};
	\draw[fill=black] (-1,5) circle (0.05);
	\node[left] at (-1, 4.5) {$3_\ell$};
	\draw[fill=black] (-1,4.5) circle (0.05);
	\node[left] at (-1, 4) {$4_\ell$};
	\draw[fill=black] (-1,4) circle (0.05);
	\node[left] at (-1, 3.5) {$5_\ell$};
	\draw[fill=black] (-1,3.5) circle (0.05);
	\node[left] at (-1, 3) {$6_\ell$};
	\draw[fill=black] (-1,3) circle (0.05);
	\node[left] at (-1, 2.5) {$7_\ell$};
	\draw[ggreen, fill=ggreen] (-1,2.5) circle (0.05);
	\node[left] at (-1, 2) {$8_\ell$};
	\draw[red, fill=red] (-1,2) circle (0.05);
	\node[left] at (-1, 1.5) {$9_\ell$};
	\draw[red, fill=red] (-1,1.5) circle (0.05);
	\node[left] at (-1, 1) {$10_\ell$};
	\draw[red, fill=red] (-1,1) circle (0.05);
	\node[right] at (1,5.5) {$1_r$};
	\draw[fill=black] (1,5.5) circle (0.05);
	\node[right] at (1,5) {$2_r$};
	\draw[fill=black] (1,5) circle (0.05);
	\node[right] at (1,4.5) {$3_r$};
	\draw[fill=black] (1,4.5) circle (0.05);
	\node[right] at (1,4) {$4_r$};
	\draw[ggreen, fill=ggreen] (1,4) circle (0.05);
	\node[right] at (1,3.5) {$5_r$};
	\draw[fill=black] (1,3.5) circle (0.05);
	\node[right] at (1,3) {$6_r$};
	\draw[ggreen, fill=ggreen] (1,3) circle (0.05);
	\node[right] at (1,2.5) {$7_r$};
	\draw[red, fill=red] (1,2.5) circle (0.05);
	\node[right] at (1,2) {$8_r$};
	\draw[red, fill=red] (1,2) circle (0.05);
	\node[right] at (1,1.5) {$9_r$};
	\draw[red, fill=red] (1,1.5) circle (0.05);
	\node[right] at (1,1) {$10_r$};
	\draw[red, fill=red] (1,1) circle (0.05);
	\node[right] at (1,0.5) {$11_r$};
	\draw[red, fill=red] (1,0.5) circle (0.05);
	\node[right] at (1,0) {$12_r$};
	\draw[red, fill=red] (1,0) circle (0.05);
	\end{tikzpicture}
\end{align*}

As $\pi \in BNC_L(n,m)$, the block $V_\sigma$ of $\sigma$ that contain $1_\ell$ contains a $(2j)_r$ for some $k$.  Furthermore, due to the properties of the Kreweras complement, there exist $t,s\geq 1$, $1 = i_1 < i_2 < \cdots < i_t \leq n$, and $1 \leq j_1 < j_2 < \cdots < j_s \leq m$ such that
\[
V_\sigma = \{(2i_p-1)_\ell\}^t_{p=1} \cup \{(2j_q)_r\}^s_{q=1}.
\]
Note that $V_\sigma$ divides the remaining nodes into $t-1$ regions on the left, $s$ regions on the right (including the one with $1_r$), and one region at the bottom.

For each $1 \leq p \leq t$ let $d_p = i_{p+1} - i_{p}$ where $i_{t+1} = n+1$.  Note that $\sum^t_{p=1} d_p = n$. For each $1 \leq p \leq t-1$ (if any such $p$ exist) let $\sigma_{\ell, p}$ denote the non-crossing partition obtained by restricting $\sigma$ to $\{(2i_p)_\ell, (2i_p+1)_\ell, \ldots, (2i_{p+1}-2)_\ell\}$.  We omitted the case $p = t$ as $\{(2i_t)_\ell, (2i_t+1)_\ell, \ldots, (2n)_\ell\}$ belongs to the bottom region.   Notice for $1 \leq p \leq t-1$ that if $\sigma'_{\ell, p}$ is obtained from $\sigma_{\ell, p}$ by adding the singleton block $\{(2i_p-1)_\ell\}$, then $\sigma'_{\ell, p}$ is naturally an element of $NC'(d_p)$.  The below diagram demonstrates an example of this restriction.
\begin{align*}
	\begin{tikzpicture}[baseline]
	\draw[thick] (1.5, 4.25) -- (2.5, 4.25) -- (2.4, 4.15);
	\draw[thick] (2.5,4.25) -- (2.4, 4.35);
	\draw[thick, dashed] (-1,5.75) -- (-1,-.25) -- (1,-.25) -- (1,5.75);
	\draw[thick, blue, densely dotted] (1, 5.5) -- (0.75, 5.25) -- (1, 5);
	\draw[thick, blue, densely dotted] (1, 4.5) -- (0.75, 4.25) -- (1, 4);
	\draw[thick, blue, densely dotted] (1, 3.5) -- (0.75, 3.25) -- (1, 3);
	\draw[thick, blue, densely dotted] (1, 2.5) -- (0.75, 2.25) -- (1, 2);
	\draw[thick, blue, densely dotted] (1, 1.5) -- (0.75, 1.25) -- (1, 1);
	\draw[thick, blue, densely dotted] (1, 0.5) -- (0.75, 0.25) -- (1, 0);
	\draw[thick, blue, densely dotted] (-1, 5.5) -- (-0.75, 5.25) -- (-1, 5);
	\draw[thick, blue, densely dotted] (-1, 4.5) -- (-0.75, 4.25) -- (-1, 4);
	\draw[thick, blue, densely dotted] (-1, 3.5) -- (-0.75, 3.25) -- (-1, 3);
	\draw[thick, blue, densely dotted] (-1, 2.5) -- (-0.75, 2.25) -- (-1, 2);
	\draw[thick, blue, densely dotted] (-1, 1.5) -- (-0.75, 1.25) -- (-1, 1);
	\draw[thick, ggreen] (-1,5.5) -- (0,5.5) -- (0,2.5) -- (-1,2.5);
	\draw[thick, ggreen] (1,4) -- (0,4);
	\draw[thick, ggreen] (1,3) -- (0,3);
	\draw[thick, black] (1,5.5) -- (0.25, 5.5) -- (0.25, 4.5) -- (1,4.5);
	\draw[thick, black] (-1,5) -- (-0.25, 5) -- (-0.25, 3) -- (-1,3);
	\draw[thick, black] (-1,4.5) -- (-0.5, 4.5) -- (-0.5, 3.5) -- (-1,3.5);
	\draw[thick, red] (1,2.5) -- (0.25,2.5) -- (0.25,.5) -- (1,.5);
	\draw[thick, red] (-1,2) -- (0.25,2);
	\draw[thick, red] (1,2) -- (0.5,2) -- (0.5,1) -- (1,1);
	\draw[thick, red] (-1,1.5) -- (-.25,1.5) -- (-0.25,0) -- (1,0);
	\node[left] at (-1, 5.5) {$1_\ell$};
	\draw[ggreen, fill=ggreen] (-1,5.5) circle (0.05);
	\node[left] at (-1, 5) {$2_\ell$};
	\draw[fill=black] (-1,5) circle (0.05);
	\node[left] at (-1, 4.5) {$3_\ell$};
	\draw[fill=black] (-1,4.5) circle (0.05);
	\node[left] at (-1, 4) {$4_\ell$};
	\draw[fill=black] (-1,4) circle (0.05);
	\node[left] at (-1, 3.5) {$5_\ell$};
	\draw[fill=black] (-1,3.5) circle (0.05);
	\node[left] at (-1, 3) {$6_\ell$};
	\draw[fill=black] (-1,3) circle (0.05);
	\node[left] at (-1, 2.5) {$7_\ell$};
	\draw[ggreen, fill=ggreen] (-1,2.5) circle (0.05);
	\node[left] at (-1, 2) {$8_\ell$};
	\draw[red, fill=red] (-1,2) circle (0.05);
	\node[left] at (-1, 1.5) {$9_\ell$};
	\draw[red, fill=red] (-1,1.5) circle (0.05);
	\node[left] at (-1, 1) {$10_\ell$};
	\draw[red, fill=red] (-1,1) circle (0.05);
	\node[right] at (1,5.5) {$1_r$};
	\draw[fill=black] (1,5.5) circle (0.05);
	\node[right] at (1,5) {$2_r$};
	\draw[fill=black] (1,5) circle (0.05);
	\node[right] at (1,4.5) {$3_r$};
	\draw[fill=black] (1,4.5) circle (0.05);
	\node[right] at (1,4) {$4_r$};
	\draw[ggreen, fill=ggreen] (1,4) circle (0.05);
	\node[right] at (1,3.5) {$5_r$};
	\draw[fill=black] (1,3.5) circle (0.05);
	\node[right] at (1,3) {$6_r$};
	\draw[ggreen, fill=ggreen] (1,3) circle (0.05);
	\node[right] at (1,2.5) {$7_r$};
	\draw[red, fill=red] (1,2.5) circle (0.05);
	\node[right] at (1,2) {$8_r$};
	\draw[red, fill=red] (1,2) circle (0.05);
	\node[right] at (1,1.5) {$9_r$};
	\draw[red, fill=red] (1,1.5) circle (0.05);
	\node[right] at (1,1) {$10_r$};
	\draw[red, fill=red] (1,1) circle (0.05);
	\node[right] at (1,0.5) {$11_r$};
	\draw[red, fill=red] (1,0.5) circle (0.05);
	\node[right] at (1,0) {$12_r$};
	\draw[red, fill=red] (1,0) circle (0.05);
	\end{tikzpicture}
	\quad
	\begin{tikzpicture}[baseline]
	\draw[thick, dashed] (-1,5.75) -- (-1,2.75);
	\draw[thick, blue, densely dotted] (-1, 5.5) -- (-0.75, 5.25) -- (-1, 5);
	\draw[thick, blue, densely dotted] (-1, 4.5) -- (-0.75, 4.25) -- (-1, 4);
	\draw[thick, blue, densely dotted] (-1, 3.5) -- (-0.75, 3.25) -- (-1, 3);
	\draw[thick, black] (-1,5) -- (-0.25, 5) -- (-0.25, 3) -- (-1,3);
	\draw[thick, black] (-1,4.5) -- (-0.5, 4.5) -- (-0.5, 3.5) -- (-1,3.5);
	\node[left] at (-1, 5.5) {$1_\ell$};
	\draw[ggreen, fill=ggreen] (-1,5.5) circle (0.05);
	\node[left] at (-1, 5) {$2_\ell$};
	\draw[fill=black] (-1,5) circle (0.05);
	\node[left] at (-1, 4.5) {$3_\ell$};
	\draw[fill=black] (-1,4.5) circle (0.05);
	\node[left] at (-1, 4) {$4_\ell$};
	\draw[fill=black] (-1,4) circle (0.05);
	\node[left] at (-1, 3.5) {$5_\ell$};
	\draw[fill=black] (-1,3.5) circle (0.05);
	\node[left] at (-1, 3) {$6_\ell$};
	\draw[fill=black] (-1,3) circle (0.05);
	\end{tikzpicture}
\end{align*}

Similarly, for each $1 \leq q \leq s+1$ let $e_q = j_q - j_{q-1}$, where $j_0 = 0$ and $j_{s+1} = m$. Note that $\sum^s_{q=1} j_q = m$.  For each $1 \leq q \leq s$ let $\sigma_{r, q}$ denote the non-crossing partition obtained by restricting $\sigma$ to $\{(2j_{q-1} + 1)_r, (2j_{q-1} + 2)_r, \ldots, (2j_{q}-1)_r\}$.  We omitted the case $q = s+1$ as $\{(2j_{s}+1)_r, (2j_{s}+2)_r, \ldots, (2m)_r\}$ belongs to the bottom region.  Notice for $1 \leq q \leq s$ that if $\sigma'_{r, q}$ is obtained from $\sigma_{r, q}$ by adding the singleton block $\{(2j_q)_r\}$, then $\sigma'_{r, q}$ is naturally an element of $NC'(e_q)$.

Finally let $\sigma_b$ denote the restriction of $\sigma$ to $\{(2i_t)_\ell, (2i_t+1)_\ell, \ldots, (2n)_\ell\} \cup \{(2j_{s}+1)_r, (2j_{s}+2)_r, \ldots, (2m)_r\}$.  Then $\sigma_b$ is easily seen to be an element of $BNC_{Lb}(2d_t-1, 2e_{s+1})$.

Now, by writing each of $\kappa_\pi(a_1, b_1)$ and $\kappa_{K(\pi)}(a_2, b_2)$ as a product of cumulants, we obtain that
\begin{align*}
\kappa_\pi(a_1, b_1)&\kappa_{K(\pi)}(a_2, b_2)z^n w^m \\
&= \kappa_{t, s}(a_1, b_1)   \prod^{t-1}_{p=1} f_1(0_{d_p},\sigma'_{\ell, p}) f_2 (0_{d_p},K(\sigma'_{\ell, p}))z^{d_p}    \prod^{s}_{q=1}  g_1(0_{e_q},\sigma'_{r, q}) g_2 (0_{e_q},K(\sigma'_{r, q})) w^{e_q} \\
& \quad \times \kappa_{\sigma_b}(\underbrace{a_2, a_1, a_2, a_1 \ldots, a_2, a_1, a_2}_{a_1 \text{ occurs }d_t-1 \text{ times}}, \underbrace{b_2, b_1, b_2, b_1, \ldots, b_2, b_1}_{b_1, b_2 \text{ occur }e_{s+1} \text{ times}} ) z^{d_t}w^{e_{s+1}}.
\end{align*}
Consequently, summing $\kappa_\pi(a_1, b_1)\kappa_{K(\pi)}(a_2, b_2)$ over all $\rho \in BNC_L(n,m)$ such that if $\tau$ is the element of $BNC(2n, 2m)$ corresponding to $(\rho, K(\rho))$ via Remark \ref{rem:Kreweras-BNC} then $V_\tau = V_\sigma$ and $\tau_b = \sigma_b$, we obtain 
\begin{align*}
&\kappa_{t, s}(a_1, b_1)   \prod^{t-1}_{p=1} (f_1\check{\ast} f_2)(0_{d_p}, 1_{d_p}) z^{d_p}    \prod^{s}_{q=1}  (g_1\check{\ast} g_2)(0_{e_q}, 1_{e_q}) w^{e_q} \\
& \quad \times \kappa_{\sigma_b}(\underbrace{a_2, a_1, a_2, a_1 \ldots, a_2, a_1, a_2}_{a_1 \text{ occurs }d_t-1 \text{ times}}, \underbrace{b_2, b_1, b_2, b_1, \ldots, b_2, b_1}_{b_1, b_2 \text{ occur }e_{s+1} \text{ times}} ) z^{d_t}w^{e_{s+1}}.
\end{align*}

Finally, if we sum over all possible $n,m\geq 1$ and all possible $\pi \in BNC_L(n,m)$, we will be summing over all possible $t,s \geq 1$, all $d_p\geq 1$ for all $1 \leq p \leq t$, all $e_q \geq 1$ for all $1 \leq p \leq s$, all $e_{s+1} \geq 0$, and all possible $\sigma_b \in BNC_{Lb}(2d_t-1, 2e_{s+1})$.  Therefore we obtain 
\[
\phi_L(z,w) = \frac{1}{\phi_{f_1  \check{\ast} f_2}(z)} K_{a_1, b_1}\left(\phi_{f_1  \check{\ast} f_2}(z), \phi_{g_1  \check{\ast} g_2}(w)\right) \psi_L(z,w)
\]
where we needed divide by $\phi_{f_1  \check{\ast} f_2}(z)$ due to the occurrence of $t-1$ instead of $t$ in the product.
\end{proof}

\begin{lem}
\label{lem:2}
Under the above notation and assumptions,
\[
\phi_R(z,w) = \frac{1}{\phi_{f_2  \check{\ast} f_1}(w)} K_{a_2, b_2}\left(\phi_{f_2  \check{\ast} f_1}(z), \phi_{g_2  \check{\ast} g_1}(w)\right) \psi_R(z,w)
\]
as holomorphic functions near $(0,0)$.
\end{lem}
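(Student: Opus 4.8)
The plan is to establish Lemma \ref{lem:2} as the mirror image of Lemma \ref{lem:1}, interchanging the two faces and the roles of $\pi$ and $K(\pi)$. Where the proof of Lemma \ref{lem:1} isolates the block of $\pi$ containing the order-minimal node $1_\ell$, here I would isolate the block $W$ of $K(\pi)$ containing the order-maximal node $1_r$ (recall the combinatorial order in which $1_\ell$ is minimal and $1_r$ is maximal). By the definition of $BNC_R(n,m)$ this block also meets the left nodes, and since it is a block of $K(\pi)$ it carries the cumulants of the pair $(a_2,b_2)$. Viewing $(\pi,K(\pi))$ as $\sigma\in BNC(2n,2m)$ through Remark \ref{rem:Kreweras-BNC}, the properties of the Kreweras complement force $W$ to take the form $\{(2i_p)_\ell\}_{p=1}^{t}\cup\{(2j_q-1)_r\}_{q=1}^{s}$ with $j_1=1$, so that $W$ supplies the single factor $\kappa_{t,s}(a_2,b_2)$ and hence, after summation, the series $K_{a_2,b_2}$.

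I would then decompose $\sigma$ along $W$ exactly as in the proof of Lemma \ref{lem:1}. The $t$ left legs and $s$ right legs of $W$ cut the remaining nodes into side regions together with a single bottom region. Adjoining to each side region its neighbouring leg as a singleton turns it into an element of $NC'$ of the appropriate size, and summing the $\pi$-value against the $K(\pi)$-value over all such elements collapses, by the pinched-convolution formula, to one factor of $(f_2\check{\ast}f_1)(0_d,1_d)$ on the side tracked by $z$ and one factor of $(g_2\check{\ast}g_1)(0_e,1_e)$ on the side tracked by $w$. The reversed orders $f_2\check{\ast}f_1$ and $g_2\check{\ast}g_1$ (rather than $f_1\check{\ast}f_2$ and $g_1\check{\ast}g_2$) appear precisely because the root leg of each region now belongs to $K(\pi)$, i.e.\ is a pair-$2$ node, so the pair-$2$ multiplicative function occupies the distinguished $NC'$-slot; this is quickly confirmed on a small region such as the $d=2$ case. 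The bottom region is by construction an element of $BNC_{Rb}(2d,2e+1)$, and summing over it produces exactly $\psi_R(z,w)$.

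Assembling these contributions and summing over all $t,s\geq 1$ together with all compositions of $n$ and $m$, the side regions build up powers of $\phi_{f_2\check{\ast}f_1}(z)$ and of $\phi_{g_2\check{\ast}g_1}(w)$, the block $W$ contributes $\kappa_{t,s}(a_2,b_2)$, and the bottom contributes $\psi_R(z,w)$. Up to a single uncancelled side factor the sum is exactly $K_{a_2,b_2}\!\left(\phi_{f_2\check{\ast}f_1}(z),\phi_{g_2\check{\ast}g_1}(w)\right)$; carrying out the same cancellation bookkeeping as in Lemma \ref{lem:1}, where the analogous one-region deficit produced the prefactor $1/\phi_{f_1\check{\ast}f_2}(z)$, leaves precisely one pinched-convolution factor uncancelled, and dividing it out yields
\[
\phi_R(z,w)=\frac{1}{\phi_{f_2\check{\ast}f_1}(w)}\,K_{a_2,b_2}\!\left(\phi_{f_2\check{\ast}f_1}(z),\ \phi_{g_2\check{\ast}g_1}(w)\right)\psi_R(z,w),
\]
which is the asserted identity. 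The division is legitimate since $\phi_{f_2\check{\ast}f_1}(w)=w+O(w^2)$ is invertible near $0$, so the right-hand side is a well-defined holomorphic function near $(0,0)$.

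The step I expect to be the main obstacle is the bookkeeping that simultaneously fixes the order of each pinched convolution and pins down the single side on which the region count falls one short of the corresponding exponent in $K_{a_2,b_2}$. Both are governed by the combinatorial order and by whether the root of a region lies in $\pi$ or in $K(\pi)$; because $\check{\ast}$ is non-commutative, a single misidentification would either flip an $f_i\check{\ast}f_j$ or attach the leftover factor to the wrong variable. Checking the convolution order and the deficit explicitly on one or two small diagrams, as indicated above, is the surest way to secure the precise prefactor recorded in the statement.
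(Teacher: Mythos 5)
Your route is the paper's own: the paper disposes of Lemma \ref{lem:2} in one sentence, as the mirror image of Lemma \ref{lem:1}, and your elaboration of that mirror is sound on almost every point. Isolating the block $W$ of $K(\pi)$ containing $1_r$, which by the definition of $BNC_R(n,m)$ and the Kreweras properties has the form $\{(2i_p)_\ell\}_{p=1}^{t}\cup\{(2j_q-1)_r\}_{q=1}^{s}$ with $j_1=1$; restricting to the side regions and adjoining the adjacent leg of $W$ as a singleton so as to land in $NC'$; observing that the distinguished first node is now a pair-$2$ node, so that the pinched convolutions come out in the reversed orders $f_2\check{\ast}f_1$ and $g_2\check{\ast}g_1$; and identifying the bottom region with an element of $BNC_{Rb}$ whose summation produces $\psi_R(z,w)$ --- all of this is exactly what the mirrored argument does.

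The gap sits precisely at the point you yourself flagged as the main obstacle and then left unchecked: on which side the region count falls one short. Mirroring Lemma \ref{lem:1}, where the left legs cut out $t-1$ regions against $t$ left legs while the right legs cut out $s$, here the left acquires $t$ regions (one \emph{above} the first left leg $(2i_1)_\ell$, containing at least $1_\ell$, plus $t-1$ between consecutive legs), while the right has only $s-1$, because $1_r$ is itself the topmost right leg. Hence the one-region deficit is on the $w$-side, and the single uncancelled factor is $\phi_{g_2\check{\ast}g_1}(w)$, giving
\[
\phi_R(z,w)=\frac{1}{\phi_{g_2\check{\ast}g_1}(w)}\,K_{a_2,b_2}\!\left(\phi_{f_2\check{\ast}f_1}(z),\,\phi_{g_2\check{\ast}g_1}(w)\right)\psi_R(z,w).
\]
The prefactor $1/\phi_{f_2\check{\ast}f_1}(w)$ in your conclusion (and in the statement as printed, which contains a typographical slip here) cannot arise from the combinatorics: an $f$-type pinched convolution is assembled from the left ($a$-) regions and is paired with the variable $z$. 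The downstream use confirms the corrected form: the proof of Lemma \ref{lem:K-expression} needs $\Phi_R(z,w)=\frac{1}{\phi^{\inv}_{g_2}(w)}Q_2(z,w)\Psi_R(z,w)$, and by equation \eqref{eq:pinch-convolution-technicality} (together with $g_1\ast g_2=g_2\ast g_1$) this forces the prefactor $1/\phi_{g_2\check{\ast}g_1}(w)$, whereas $\phi_{f_2\check{\ast}f_1}$ evaluated at $\phi^{\inv}_{g_1\ast g_2}(w)$ simplifies to nothing usable. That you went on to justify the invertibility of $\phi_{f_2\check{\ast}f_1}(w)$ near $0$ shows the misattribution was absorbed rather than caught; the small-diagram check you proposed would have exposed it. With the prefactor corrected, your argument is complete and coincides with the paper's.
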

\begin{proof}
The proof of this result can be obtained by applying a mirror to the proof of  Lemma \ref{lem:1}.
\end{proof}

To understand $\psi_L(z,w)$ and $\psi_R(z,w)$ we demonstrate the following.
\begin{lem}
\label{lem:3}
Under the above notation and assumptions,
\[
\psi_L(z,w) = \phi_{f_1  \check{\ast} f_2}(z) + \frac{z}{\phi_{f_2  \check{\ast} f_1}(z)  \phi_{g_2  \check{\ast} g_1}(w)} K_{a_2, b_2}\left(\phi_{f_2  \check{\ast} f_1}(z), \phi_{g_2  \check{\ast} g_1}(w)\right) \psi_R(z,w)
\]
as holomorphic functions near $(0,0)$.
\end{lem}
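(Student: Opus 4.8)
The plan is to imitate the proof of Lemma~\ref{lem:1}, now decomposing the series $\psi_L(z,w)$ according to the block $V$ of the top-left node $1_\ell$ of a diagram $\pi \in BNC_{Lb}(2n+1,2m)$. Because the defining condition of $BNC_{Lb}(2n+1,2m)$ forbids any block from containing both an even- and an odd-indexed node, and $1_\ell$ is odd, the block $V$ consists entirely of odd nodes; under the argument assignment used in $\psi_L$ these are exactly the $a_2$ nodes on the left and the $b_2$ nodes on the right. Hence $V$ always contributes a genuine cumulant $\kappa_{t,s}(a_2,b_2)$, where $t\ge 1$ counts the odd left nodes of $V$ and $s\ge 0$ the odd right nodes. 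As in Lemma~\ref{lem:1}, every series in sight converges absolutely near $(0,0)$, so all rearrangements below are legitimate.

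First I would dispose of the case $s=0$, where $V$ meets no right node. Since $\sigma_L$ joins no left node to a right node and leaves $1_\ell$ a singleton, the only route connecting $1_\ell$ to the right half is through $V$ itself; thus $\pi\vee\sigma_L=1_{2n+1,2m}$ forces $m=0$ when $s=0$. The surviving purely-left diagrams are precisely the interlocking odd/even configurations controlled by $\sigma_L$, i.e.\ the pinched-convolution diagrams of Subsection~\ref{subsec:Bi-Krew}; summing the products of $f_2$- and $f_1$-cumulants over all of them collapses, through the definition of $\check{\ast}$ and equation~\eqref{eq:pinch-convolution-technicality}, to $\sum_{k\ge1}(f_1\check{\ast}f_2)(0_k,1_k)z^k=\phi_{f_1\check{\ast}f_2}(z)$. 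This produces the first summand of the claimed identity and, incidentally, shows it equals $\psi_L(z,0)$.

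For $s\ge1$ (equivalently $m\ge1$) I would argue, via the non-crossing and join constraints exactly as in Lemma~\ref{lem:1}, that $V$ reaches the top-right node $1_r$ and that its odd left nodes carve the left half into $t-1$ intermediate regions and a bottom region, while its odd right nodes carve the right half into $s-1$ intermediate regions and a bottom region. The right-hand count here mirrors the \emph{left}-hand analysis of Lemma~\ref{lem:1} rather than its right-hand analysis, because in the present setting the right nodes of $V$ sit at odd positions. Adjoining to each intermediate region the neighbouring node of $V$ as a singleton exhibits it as an element of some $NC'(d_p)$ or $NC'(e_q)$, and summing the associated $f_2(0,\cdot)\,f_1(0,K(\cdot))$ (resp.\ $g_2(0,\cdot)\,g_1(0,K(\cdot))$) products over all fillings yields one factor $\phi_{f_2\check{\ast}f_1}(z)$ per left region and $\phi_{g_2\check{\ast}g_1}(w)$ per right region. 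The two bottom regions fuse into a single element of $BNC_{Rb}(2n',2m'+1)$ with the indices $1$ and $2$ interchanged, whose generating sum is exactly $\psi_R(z,w)$. Summing over $t,s\ge1$, the region sizes, and the bottom configuration then reconstructs
\[
\frac{z}{\phi_{f_2\check{\ast}f_1}(z)\,\phi_{g_2\check{\ast}g_1}(w)}\,K_{a_2,b_2}\!\left(\phi_{f_2\check{\ast}f_1}(z),\,\phi_{g_2\check{\ast}g_1}(w)\right)\psi_R(z,w),
\]
since $K_{a_2,b_2}$ supplies the weight $\kappa_{t,s}(a_2,b_2)\,\phi_{f_2\check{\ast}f_1}(z)^{t}\,\phi_{g_2\check{\ast}g_1}(w)^{s}$ whereas only $t-1$ left and $s-1$ right regions actually appear.

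The step I expect to be the crux is explaining the asymmetric numerator $z$. The point is that each factor $\phi_{f_2\check{\ast}f_1}(z)$ simultaneously encodes one odd-left node of $V$ (through its leading term $z$) together with the region hanging from that node; likewise each $\phi_{g_2\check{\ast}g_1}(w)$ encodes one odd-right node of $V$ and its region. The region below the last odd-left node of $V$ is not an intermediate region but the bottom block recorded by $\psi_R$, so one must strip a single factor $\phi_{f_2\check{\ast}f_1}(z)$ and then reinstate only that node's weight $z$, which is the origin of the numerator. On the right the analogous reinstatement of the last odd-right node's weight is already built into the extra power of $w$ in the definition of $\psi_R$ (whose right leg has odd length $2m'+1$), so no compensating factor of $w$ is needed. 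I would confirm this accounting by checking that the $z$- and $w$-degrees sum to $n+1$ and $m$ respectively, and the only genuinely new combinatorial inputs to verify carefully are that $V$ contains $1_r$ once $m\ge1$ and that the right half splits into exactly $s-1$ regions; both follow from the non-crossing and join conditions by the inductive argument already indicated for the bi-free Kreweras complement in Remark~\ref{rem:Kreweras-BNC}.
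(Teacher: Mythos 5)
Your proposal is correct and takes essentially the same route as the paper's proof: you decompose according to the block $V$ of $1_\ell$, identify the $m=0$ contribution as $\phi_{f_1\check{\ast}f_2}(z)$, resolve the $t-1$ left and $s-1$ right intermediate regions into factors $\phi_{f_2\check{\ast}f_1}(z)$ and $\phi_{g_2\check{\ast}g_1}(w)$ via the pinched convolution, and recognize the fused bottom region as a $\psi_R$-configuration, with your bookkeeping of the stripped factors and the lone extra $z$ agreeing exactly with the paper's. The one loose point is your justification that $s=0$ forces $m=0$ (and that $1_r\in V$ when $m\geq 1$): the statement that ``the only route connecting $1_\ell$ to the right half is through $V$'' is not by itself conclusive, since a path could leave $V$ through $\sigma_L$ and reach the right side via an all-even block; the correct argument traps the component of $1_\ell$ in $\pi\vee\sigma_L$ inside the interval ending at the lowest node of $V$ using the non-crossing condition---but since the paper asserts these same facts without proof and you flag them for verification, this is not a substantive gap.
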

\begin{proof}
First observe that dividing by $\phi_{f_2  \check{\ast} f_1}(z)  \phi_{g_2  \check{\ast} g_1}(w)$ is valid (e.g. see the end of the proof for how this division term occurs).  Furthermore all terms involved in proof involve absolutely summable series and thus can be rearranged in any order we desired.

First observe for all $n \geq 0$ that $BNC_{Lb}(2n+1, 0) = NC'(n+1)$.  Therefore, summing over all $n \geq 1$ and elements of $BNC_{Lb}(2n+1, 0)$ in $\psi_L(z,w)$ directly yields $\phi_{f_1  \check{\ast} f_2}(z)$ as $\varphi(a_1) = 1$.  Thus we focus on the elements of $BNC_{Lb}(2n+1, 2m)$ with $m \geq 1$.

For now fix $n \geq 0$, $m \geq 1$, and a $\pi \in BNC_{Lb}(2n+1,2m)$.  Let $V_\pi$ denote the block of $\pi$ containing $1_\ell$.  Note that in order for $\pi \vee \sigma_L = 1_{2n+1, m}$, it must be the case that $V_\pi$ contains $1_r$.  Furthermore there must exists $t,s\geq 1$, $1 = i_1 < i_2 < \cdots < i_t \leq n$, and $1 = j_1 < j_2 < \cdots < j_s \leq m$ such that
\[
V_\pi = \{(2i_p-1)_\ell\}^t_{p=1} \cup \{(2j_q-1)_r\}^s_{q=1}.
\]
Note that $V_\pi$ divides the remaining nodes into $t-1$ regions on the left, $s-1$ regions on the right, and one region at the bottom.  Below is a diagram of such a $\pi$ with the same conventions as in Lemma \ref{lem:1} where $n = 5$ and $m = 4$.
\begin{align*}
\begin{tikzpicture}[baseline]
	\draw[thick, dashed] (-1,6.25) -- (-1,.75) -- (1,.75) -- (1,6.25);
	\draw[thick, blue, densely dotted] (1, 5.5) -- (0.75, 5.75) -- (1, 6);
	\draw[thick, blue, densely dotted] (1, 4.5) -- (0.75, 4.75) -- (1, 5);
	\draw[thick, blue, densely dotted] (1, 3.5) -- (0.75, 3.75) -- (1, 4);
	\draw[thick, blue, densely dotted] (1, 2.5) -- (0.75, 2.75) -- (1, 3);
	\draw[thick, blue, densely dotted] (-1, 5.5) -- (-0.75, 5.25) -- (-1, 5);
	\draw[thick, blue, densely dotted] (-1, 4.5) -- (-0.75, 4.25) -- (-1, 4);
	\draw[thick, blue, densely dotted] (-1, 3.5) -- (-0.75, 3.25) -- (-1, 3);
	\draw[thick, blue, densely dotted] (-1, 2.5) -- (-0.75, 2.25) -- (-1, 2);
	\draw[thick, blue, densely dotted] (-1, 1.5) -- (-0.75, 1.25) -- (-1, 1);
	\draw[thick, ggreen] (-1,6) -- (0,6) -- (0,3) -- (-1,3);
	\draw[thick, ggreen] (1,4) -- (0,4);
	\draw[thick, ggreen] (1,6) -- (0,6);
	\draw[thick, black] (1,5.5) -- (0.5,5.5) -- (0.5,4.5) -- (1,4.5);
	\draw[thick, black] (-1,5.5) -- (-0.5,5.5) -- (-0.5,3.5) -- (-1,3.5);
	\draw[thick, black] (-0.5,4.5) -- (-1,4.5);
	\draw[thick, red] (1,2.5) -- (0.5,2.5) -- (0.5,3.5) -- (1,3.5);
	\draw[thick, red] (-1,2.5) -- (0.5,2.5);
	\draw[thick, red] (-1,2) -- (-0.5,2) -- (-0.5,1) -- (-1,1);
	\node[left] at (-1, 6) {$1_\ell$};
	\draw[ggreen, fill=ggreen] (-1,6) circle (0.05);	
	\node[left] at (-1, 5.5) {$2_\ell$};
	\draw[fill=black] (-1,5.5) circle (0.05);
	\node[left] at (-1, 5) {$3_\ell$};
	\draw[fill=black] (-1,5) circle (0.05);
	\node[left] at (-1, 4.5) {$4_\ell$};
	\draw[fill=black] (-1,4.5) circle (0.05);
	\node[left] at (-1, 4) {$5_\ell$};
	\draw[fill=black] (-1,4) circle (0.05);
	\node[left] at (-1, 3.5) {$6_\ell$};
	\draw[fill=black] (-1,3.5) circle (0.05);
	\node[left] at (-1, 3) {$7_\ell$};
	\draw[ggreen, fill=ggreen] (-1,3) circle (0.05);
	\node[left] at (-1, 2.5) {$8_\ell$};
	\draw[red, fill=red] (-1,2.5) circle (0.05);
	\node[left] at (-1, 2) {$9_\ell$};
	\draw[red, fill=red] (-1,2) circle (0.05);
	\node[left] at (-1, 1.5) {$10_\ell$};
	\draw[red, fill=red] (-1,1.5) circle (0.05);
	\node[left] at (-1, 1) {$11_\ell$};
	\draw[red, fill=red] (-1,1) circle (0.05);
	\node[right] at (1,6) {$1_r$};
	\draw[ggreen, fill=ggreen] (1,6) circle (0.05);	
	\node[right] at (1,5.5) {$2_r$};
	\draw[fill=black] (1,5.5) circle (0.05);
	\node[right] at (1,5) {$3_r$};
	\draw[fill=black] (1,5) circle (0.05);
	\node[right] at (1,4.5) {$4_r$};
	\draw[fill=black] (1,4.5) circle (0.05);
	\node[right] at (1,4) {$5_r$};
	\draw[ggreen, fill=ggreen] (1,4) circle (0.05);
	\node[right] at (1,3.5) {$6_r$};
	\draw[red, fill=red] (1,3.5) circle (0.05);
	\node[right] at (1,3) {$7_r$};
	\draw[red, fill=red] (1,3) circle (0.05);
	\node[right] at (1,2.5) {$8_r$};
	\draw[red, fill=red] (1,2.5) circle (0.05);
	\end{tikzpicture}
\end{align*}
For each $1 \leq p \leq t$ let $d_p = i_{p+1} - i_{p}$ where $i_{t+1} = n+1$.  Note that $\sum^t_{p=1} d_p = n$. For each $1 \leq p \leq t-1$ (if any such $p$ exist) let $\pi_{\ell, p}$ denote the non-crossing partition obtained by restricting $\pi$ to $\{(2i_p)_\ell, (2i_p+1)_\ell, \ldots, (2i_{p+1}-2)_\ell\}$.  We omitted the case $p = t$ as $\{(2i_t)_\ell, (2i_t+1)_\ell, \ldots, (2n+1)_\ell\}$ (which only exists if $i_t \neq n$ or, equivalently, $d_t > 1$) belongs to the bottom region.   Notice for $1 \leq p \leq t-1$ that if $\pi'_{\ell, p}$ is obtained from $\pi_{\ell, p}$ by adding the singleton block $\{(2i_p-1)_\ell\}$, then $\pi'_{\ell, p}$ is naturally an element of $NC'(d_p)$.

Similarly, for each $1 \leq q \leq s$ let $e_q = j_{q+1} - j_{q}$, where $j_{s+1} = m+1$. Note that $\sum^s_{q=1} j_q = m$.  For each $1 \leq q \leq s-1$ let $\pi_{r, q}$ denote the non-crossing partition obtained by restricting $\pi$ to $\{(2j_q)_r, (2j_q+1)_r, \ldots, (2j_{q+1}-2)_r\}$.  We omitted the case $q = s$ as $\{(2j_{s})_r, (2j_{s}+1)_\ell, \ldots, (2m)_r\}$ belongs to the bottom region.  Notice for $1 \leq q \leq s$ that if $\pi'_{r, q}$ is obtained from $\pi_{r, q}$ by adding the singleton block $\{(2j_q-1)_r\}$, then $\pi'_{r, q}$ is naturally an element of $NC'(e_q)$.

Finally let $\pi_b$ denote the restriction of $\pi$ to $\{(2i_p)_\ell, (2i_p+1)_\ell, \ldots, (2i_{p+1}-2)_\ell\} \cup \{(2j_{s})_r, (2j_{s}+1)_\ell, \ldots, (2m)_r\}$.  Then $\pi_b$ is easily seen to be an element of $BNC_{Rb}(2(d_t-1), 2e_{s}-1)$.

Now, by writing 
\[
\kappa_\pi(\underbrace{a_2, a_1, a_2, a_1 \ldots, a_2, a_1, a_2}_{a_1 \text{ occurs }n \text{ times}}, \underbrace{b_2, b_1, b_2, b_1, \ldots, b_2, b_1}_{b_1, b_2 \text{ occur }m \text{ times}} ) z^{n+1} w^m
\] 
as a product of cumulants, we obtain 
\begin{align*}
&z\kappa_{t, s}(a_2, b_2) \prod^{t-1}_{p=1} f_2(0_{d_p},\sigma'_{l, p}) f_1 (0_{d_p},K(\sigma'_{l, p}))z^{d_p}    \prod^{s-1}_{q=1}  g_2(0_{e_q},\sigma'_{r, q}) g_1 (0_{e_q},K(\sigma'_{r, q})) w^{e_q}\\
& \quad \times \kappa_{\sigma_b}(\underbrace{a_1, a_2, a_1, a_2 \ldots, a_1, a_2}_{a_1, a_2 \text{ occur }d_t-1 \text{ times}}, \underbrace{b_1, b_2, b_1, b_2, \ldots, b_1, b_2, b_1}_{b_2 \text{ occurs }e_{s}-1 \text{ times}} ) z^{d_t}w^{e_{s}}
\end{align*}
where the extra $z$ occurs as $\sum^t_{p=1} d_p = n$.  Consequently, by summing over all $\rho \in BNC_{Lb}(2n+1,2m)$ such that $V_\rho = V_\pi$ and $\rho_b = \pi_b$, we obtain 
\begin{align*}
&z\kappa_{t, s}(a_2, b_2)  \prod^{t-1}_{p=1} (f_2\check{\ast} f_1)(0_{d_p}, 1_{d_p}) z^{d_p}    \prod^{s-1}_{q=1}  (g_2\check{\ast} g_1)(0_{e_q}, 1_{e_q}) w^{e_q} \\
& \quad \times \kappa_{\sigma_b}(\underbrace{a_1, a_2, a_1, a_2 \ldots, a_1, a_2}_{a_1, a_2 \text{ occur }d_t-1 \text{ times}}, \underbrace{b_1, b_2, b_1, b_2, \ldots, b_1, b_2, b_1}_{b_2 \text{ occurs }e_{s}-1 \text{ times}} ) z^{d_t}w^{e_{s}}.
\end{align*}

Finally, if we sum over all possible $n\geq 0$, $m\geq 1$, and all possible $\pi \in BNC_{Lb}(2n+1,2m)$, we will be summing over all possible $t,s \geq 1$, all $d_p\geq 1$ for all $1 \leq p \leq t$, all $e_q \geq 1$ for all $1 \leq p \leq s$, and all possible $\pi_b \in BNC_{Rb}(2(d_t-1), 2e_{s}-1)$.  Therefore we obtain 
\[
\psi_L(z,w) = \phi_{f_1  \check{\ast} f_2}(z) + \frac{z}{\phi_{f_2  \check{\ast} f_1}(z)  \phi_{g_2  \check{\ast} g_1}(w)} K_{a_2, b_2}\left(\phi_{f_2  \check{\ast} f_1}(z), \phi_{g_2  \check{\ast} g_1}(w)\right) \psi_R(z,w)
\]
where we needed divide by $\phi_{f_2  \check{\ast} f_1}(z)  \phi_{g_2  \check{\ast} g_1}(w)$ due to the occurrence of $t-1$ and $s-1$ instead of $t$ and $s$ in the products.
\end{proof}

\begin{lem}
\label{lem:4}
Under the above notation and assumptions,
\[
\psi_R(z,w) = \phi_{g_2  \check{\ast} g_1}(w) + \frac{w}{\phi_{f_1  \check{\ast} f_2}(z)  \phi_{g_1  \check{\ast} g_2}(w)} K_{a_1, b_1}\left(\phi_{f_1  \check{\ast} f_2}(z), \phi_{g_1  \check{\ast} g_2}(w)\right) \psi_L(z,w)
\]
as holomorphic functions near $(0,0)$.
\end{lem}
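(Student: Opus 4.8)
The plan is to repeat, in mirror image, the decomposition carried out in the proof of Lemma \ref{lem:3}; this is the same relationship that Lemma \ref{lem:2} bears to Lemma \ref{lem:1}. The only structural difference between $\psi_R$ and $\psi_L$ is that the distinguished ``long'' side is now the right one (there are $2m+1$ right nodes against $2n$ left nodes) and that the alternating fillings place pair $1$ at the odd positions and pair $2$ at the even positions, reversing the roles they played in $\psi_L$. Every pinched-convolution order, as well as the appearance of $K_{a_1,b_1}$ in place of $K_{a_2,b_2}$, will be dictated by this reversal.

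First I would treat the terms with $n = 0$ separately. For these the partitions lie in $BNC_{Rb}(0,2m+1)$, and exactly as $BNC_{Lb}(2n+1,0) = NC'(n+1)$ in Lemma \ref{lem:3} one checks that $BNC_{Rb}(0,2m+1) = NC'(m+1)$. The $2m+1$ right nodes carry the alternating filling $b_1, b_2, \ldots, b_2, b_1$, so summing $\kappa_\pi$ over these partitions and using $\varphi(b_2) = 1$ to discard the superfluous singletons yields precisely the first summand $\phi_{g_2 \check{\ast} g_1}(w)$.

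For $n \geq 1$ I would fix $\pi \in BNC_{Rb}(2n,2m+1)$ and isolate the block $V_\pi$ containing $1_r$. Because $\pi \vee \sigma_R = 1_{2n,2m+1}$ and no block of $\pi$ mixes even and odd indices, $V_\pi$ is forced to contain $1_\ell$ as well and to have the form $V_\pi = \{(2i_p-1)_\ell\}_{p=1}^{t} \cup \{(2j_q-1)_r\}_{q=1}^{s}$ with $i_1 = j_1 = 1$. This block cuts the remaining nodes into $t-1$ left regions, $s-1$ right regions, and one bottom region. Setting $d_p = i_{p+1}-i_p$ and $e_q = j_{q+1}-j_q$ with $i_{t+1}=n+1$ and $j_{s+1}=m+1$, each left region (with its singleton restored) is an element of $NC'(d_p)$ and each right region one of $NC'(e_q)$; summing the restricted products $\kappa_\pi(a_1,b_1)\,\kappa_{K(\pi)}(a_2,b_2)$ over the interiors collapses them to $(f_1 \check{\ast} f_2)(0_{d_p},1_{d_p})$ and $(g_1 \check{\ast} g_2)(0_{e_q},1_{e_q})$, while $V_\pi$ itself contributes $\kappa_{t,s}(a_1,b_1)$. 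The bottom region is then seen to be an element of $BNC_{Lb}(2d_t-1,\,2e_s)$, so summing over all bottoms reproduces $\psi_L(z,w)$.

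Finally, summing over all admissible $t, s \geq 1$ and all block data assembles the factors $\kappa_{t,s}(a_1,b_1)$ into $K_{a_1,b_1}\!\left(\phi_{f_1\check{\ast}f_2}(z),\phi_{g_1\check{\ast}g_2}(w)\right)$, with the leftover numerator $w$ coming from $\sum_{q=1}^{s} e_q = m$ weighed against $w^{m+1}$, and the denominator $\phi_{f_1\check{\ast}f_2}(z)\,\phi_{g_1\check{\ast}g_2}(w)$ coming from the two products running only to $t-1$ and $s-1$ rather than $t$ and $s$. I expect the main obstacle to be precisely this parity and orientation bookkeeping: one must verify that anchoring at $1_r$ forces $1_\ell \in V_\pi$, that the bottom region genuinely lands in $BNC_{Lb}$ rather than $BNC_{Rb}$, and, most importantly, that the odd/even placement of the alternating fillings produces the orders $f_1 \check{\ast} f_2$ and $g_1 \check{\ast} g_2$ (the reverse of those in Lemma \ref{lem:3}) together with the base-case order $g_2 \check{\ast} g_1$. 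Getting these orientations right is exactly what separates Lemma \ref{lem:4} from a naive transcription of Lemma \ref{lem:3}.
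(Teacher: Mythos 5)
Your proposal is correct and takes the same approach as the paper: the paper's entire proof of Lemma \ref{lem:4} is the one-line instruction to apply a mirror to the proof of Lemma \ref{lem:3}, and your argument is precisely that mirroring carried out in detail, with every orientation-sensitive item coming out right (the base case $\phi_{g_2\check{\ast}g_1}(w)$ via $\varphi(b_2)=1$, the region orders $f_1\check{\ast}f_2$ and $g_1\check{\ast}g_2$, the block contribution $\kappa_{t,s}(a_1,b_1)$, the bottom landing in $BNC_{Lb}(2d_t-1,2e_s)$, and the leftover factor of $w$). The only cosmetic slip is the phrase ``the restricted products $\kappa_\pi(a_1,b_1)\kappa_{K(\pi)}(a_2,b_2)$,'' which is the summand of Lemmata \ref{lem:1} and \ref{lem:2}; in Lemma \ref{lem:4} the summand is a single $\kappa_\pi$ with alternating entries that factors over the odd/even parity classes, the even part of each region being forced to equal the Kreweras complement of its odd part.
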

\begin{proof}
The proof of this result can be obtained by applying a mirror to the proof of  Lemma \ref{lem:3}.
\end{proof}

The above equations are enough now complete the proof of Theorem \ref{thm:S-op-property} via some algebraic operations.  To simplify notation, let
\begin{align*}
\Phi_L(z,w) &:= \phi_L\left( \phi^\inv_{f_1  \ast f_2}(z), \phi^\inv_{g_1  \ast g_2}(w)\right)\\
\Phi_R(z,w) &:= \phi_R\left( \phi^\inv_{f_1  \ast f_2}(z), \phi^\inv_{g_1  \ast g_2}(w)\right) \\ 
\Psi_L(z,w) &:= \psi_L\left(  \phi^\inv_{f_1  \ast f_2}(z), \phi^\inv_{g_1  \ast g_2}(w)\right) \\
\Psi_R(z,w) &:= \psi_R\left(  \phi^\inv_{f_1  \ast f_2}(z), \phi^\inv_{g_1  \ast g_2}(w)\right).
\end{align*}

\begin{lem}
\label{lem:5}
Under the above notation and assumptions,
\[
\Psi_L(z,w) = \frac{\phi^\inv_{f_1}(z)\left(1 + \frac{1}{z} Q_2(z,w)\right)}{1 - \frac{1}{zw} Q_1(z,w) Q_2(z,w)} \qqand \Psi_R(z,w) = \frac{\phi^\inv_{g_2}(w)\left(1 + \frac{1}{w} Q_1(z,w)\right)}{1 - \frac{1}{zw} Q_1(z,w) Q_2(z,w)}
\]
as holomorphic functions near $(0,0)$.
\end{lem}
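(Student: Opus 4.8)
The plan is to substitute $z \mapsto \phi^\inv_{f_1 \ast f_2}(z)$ and $w \mapsto \phi^\inv_{g_1 \ast g_2}(w)$ into the recursions of Lemmas \ref{lem:3} and \ref{lem:4}, which by the definitions of $\Psi_L$ and $\Psi_R$ turns them into two identities relating $\Psi_L$ and $\Psi_R$, and then to solve the resulting $2\times 2$ linear system. The two ingredients that make the substitution collapse are equation \eqref{eq:pinch-convolution-technicality}, which gives $\phi_{f_1 \check{\ast} f_2}(\phi^\inv_{f_1 \ast f_2}(z)) = \phi^\inv_{f_1}(z)$ and, since $f_1 \ast f_2 = f_2 \ast f_1$, also $\phi_{f_2 \check{\ast} f_1}(\phi^\inv_{f_1 \ast f_2}(z)) = \phi^\inv_{f_2}(z)$ together with the analogous statements for the $g_k$, and equation \eqref{eq:convolution-with-inverse-series}, which gives $\phi^\inv_{f_1\ast f_2}(z) = \frac{1}{z}\phi^\inv_{f_1}(z)\phi^\inv_{f_2}(z)$ and $\phi^\inv_{g_1\ast g_2}(w) = \frac{1}{w}\phi^\inv_{g_1}(w)\phi^\inv_{g_2}(w)$.

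Applying \eqref{eq:pinch-convolution-technicality} to Lemma \ref{lem:3}, each inner composition $\phi_{f_i \check{\ast} f_j}(\phi^\inv_{f_1\ast f_2}(z))$ becomes $\phi^\inv_{f_i}(z) = c^\inv_{a_i}(z)$, and likewise on the right coordinate; so the leading term $\phi_{f_1\check{\ast} f_2}(z)$ becomes $\phi^\inv_{f_1}(z)$, the argument pair of $K_{a_2,b_2}$ becomes $(c^\inv_{a_2}(z), c^\inv_{b_2}(w))$ and hence $K_{a_2,b_2}(\cdots)$ becomes exactly $Q_2(z,w)$ by \eqref{eq:Q-formula}, and the coefficient $\frac{z}{\phi_{f_2\check{\ast} f_1}(z)\phi_{g_2\check{\ast} g_1}(w)}$ becomes $\frac{\phi^\inv_{f_1\ast f_2}(z)}{\phi^\inv_{f_2}(z)\phi^\inv_{g_2}(w)}$, which by \eqref{eq:convolution-with-inverse-series} collapses to $\frac{\phi^\inv_{f_1}(z)}{z\,\phi^\inv_{g_2}(w)}$. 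The mirror computation applied to Lemma \ref{lem:4} is identical after interchanging the two coordinates, so I expect to obtain the system
\begin{align*}
\Psi_L(z,w) &= \phi^\inv_{f_1}(z) + \frac{\phi^\inv_{f_1}(z)}{z\,\phi^\inv_{g_2}(w)}\,Q_2(z,w)\,\Psi_R(z,w), \\
\Psi_R(z,w) &= \phi^\inv_{g_2}(w) + \frac{\phi^\inv_{g_2}(w)}{w\,\phi^\inv_{f_1}(z)}\,Q_1(z,w)\,\Psi_L(z,w).
\end{align*}

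Writing $P = \phi^\inv_{f_1}(z)$ and $G = \phi^\inv_{g_2}(w)$, this is linear in $(\Psi_L,\Psi_R)$. Substituting the second equation into the first, the stray factors of $G$ and of $P$ cancel in the cross term, leaving $\Psi_L = P + \frac{P}{z}Q_2 + \frac{1}{zw}Q_1 Q_2\,\Psi_L$; solving for $\Psi_L$ yields $\Psi_L = \frac{P(1+\frac1z Q_2)}{1-\frac{1}{zw}Q_1 Q_2}$, which is the claimed expression since $P = \phi^\inv_{f_1}(z)$, and the symmetric back-substitution gives the formula for $\Psi_R$. I expect the only delicate point to be the bookkeeping in the previous paragraph, namely correctly matching which $\check{\ast}$-convolution lands on which coordinate so that \eqref{eq:pinch-convolution-technicality} and \eqref{eq:convolution-with-inverse-series} apply cleanly; once the system is set up the solution is purely mechanical, and the surviving factors arrange exactly into the stated common denominator $1 - \frac{1}{zw}Q_1(z,w)Q_2(z,w)$.
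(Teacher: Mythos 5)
Your proposal is correct and follows essentially the same route as the paper: substitute the inverse series into the recursions of Lemmas \ref{lem:3} and \ref{lem:4}, collapse the compositions via equations \eqref{eq:pinch-convolution-technicality} and \eqref{eq:convolution-with-inverse-series} (including the commutativity $f_1 \ast f_2 = f_2 \ast f_1$ needed to handle $\phi_{f_2 \check{\ast} f_1}$), and solve the resulting linear system in $(\Psi_L, \Psi_R)$, whose solvability near $(0,0)$ holds because $Q_1 Q_2$ has leading term $(zw)^2$ so $1 - \frac{1}{zw}Q_1(z,w)Q_2(z,w)$ is invertible there. The only cosmetic difference is that the paper eliminates $\Psi_L$ first to solve for $\Psi_R$, whereas you eliminate $\Psi_R$ first; the computations are identical.
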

\begin{proof}
First observe both expressions are valid as holomorphic functions near $(0, 0)$ as the leading coefficient of both $Q_1(z,w)$ and $Q_2(z,w)$ is $zw$ so $1 - \frac{1}{zw} Q_1(z,w) Q_2(z,w)$ is invertible near $(0,0)$.  

Using Lemmata \ref{lem:3} and \ref{lem:4} along with equations (\ref{eq:pinch-convolution-technicality}, \ref{eq:convolution-with-inverse-series}), we obtain that
\begin{align*}
\Psi_L(z,w) &= \phi^\inv_{f_1}(z) + \frac{\phi^\inv_{f_1  \ast f_2}(z)}{\phi^\inv_{f_2}(z)  \phi^\inv_{g_2}(w)}Q_2(z,w) \Psi_R(z,w) \\
&= \phi^\inv_{f_1}(z) + \frac{\phi^\inv_{f_1}(z)}{z \phi^\inv_{g_2}(w)}   Q_2(z,w) \Psi_R(z,w)
\end{align*}
and
\begin{align*}
\Psi_R(z,w) &= \phi^\inv_{g_2}(w) + \frac{\phi^\inv_{g_1  \ast g_2}(w)}{\phi^\inv_{f_1}(z)  \phi^\inv_{g_1}(w)}Q_1(z,w) \Psi_L(z,w) \\
&= \phi^\inv_{g_2}(w) + \frac{\phi^\inv_{g_2}(w)}{w \phi^\inv_{f_1}(z)}   Q_1(z,w) \Psi_L(z,w).
\end{align*}
Substituting the first equation into the second equation yields
\begin{align*}
\Psi_R(z,w) &=\phi^\inv_{g_2}(w) + \frac{\phi^\inv_{g_2}(w)}{w \phi^\inv_{f_1}(z)}   Q_1(z,w) \left(\phi^\inv_{f_1}(z) + \frac{\phi^\inv_{f_1}(z)}{z \phi^\inv_{g_2}(w)}   Q_2(z,w) \Psi_R(z,w)\right) \\
&=\phi^\inv_{g_2}(w) + \frac{\phi^\inv_{g_2}(w)}{w}   Q_1(z,w) + \frac{1}{zw} Q_1(z,w)   Q_2(z,w) \Psi_R(z,w).
\end{align*}
Rearranging this equation and solving for $\Psi_R(z,w)$ yields the desired expression.  A similar substitution yields the expression for $\Psi_L(z,w)$.
\end{proof}

The proof of Lemma \ref{lem:K-expression} is now a simple computation.
\begin{proof}[Proof of Lemma \ref{lem:K-expression}]
First, by Lemmata \ref{lem:1} and \ref{lem:2} and equations (\ref{eq:pinch-convolution-technicality}, \ref{eq:Q-formula}) we see that
\[
\Phi_L(z,w) =\frac{1}{\phi^\inv_{f_1}(z)} Q_1(z,w) \Psi_L(z,w) \qqand \Phi_R(z,w) =\frac{1}{\phi^\inv_{g_2}(w)} Q_2(z,w) \Psi_R(z,w)
\]
By equations \eqref{eq:decompose-K}  and Lemma \ref{lem:5}, we see that
\begin{align*}
K_{a_1a_2,b_2b_1} \left( c^{\inv}_{a_1a_2}(z),  c^{\inv}_{b_2b_1}(w)\right) 
&= \Phi_L(z,w) + \Phi_R(z,w) \\
&= \frac{1}{\phi^\inv_{f_1}(z)} Q_1(z,w) \left(\frac{\phi^\inv_{f_1}(z)\left(1 + \frac{1}{z} Q_2(z,w)\right)}{1 - \frac{1}{zw} Q_1(z,w) Q_2(z,w)}\right) \\
& \qquad + \frac{1}{\phi^\inv_{g_2}(w)} Q_2(z,w) \left( \frac{\phi^\inv_{g_2}(w)\left(1 + \frac{1}{w} Q_1(z,w)\right)}{1 - \frac{1}{zw} Q_1(z,w) Q_2(z,w)}  \right) \\
&= \frac{Q_1(z,w) + Q_2(z,w) + \left(\frac{1}{z} + \frac{1}{w}\right) Q_1(z,w)Q_2(z,w)}{1 - \frac{1}{zw}Q_1(z,w)Q_2(z,w)}. \qedhere
\end{align*}
\end{proof}

More algebraic manipulation (and holomorphic extensions) will yield Theorem \ref{thm:S-op-property}.
\begin{proof}[Proof of Theorem \ref{thm:S-op-property}]
By multiply the numerator and denominator of the expression in Lemma \ref{lem:K-expression} by $\frac{1}{z} - \frac{1}{w}$ when $(z,w)$ are near zero and not equal yields
\begin{align*}
K_{a_1a_2,b_2b_1} \left( c^{\inv}_{a_1a_2}(z),  c^{\inv}_{b_2b_1}(w)\right) 
&= \frac{\left(\frac{1}{z} - \frac{1}{w}\right)\left(Q_1(z,w) + Q_2(z,w)\right) + \left(\frac{1}{z^2} - \frac{1}{w^2}\right) Q_1(z,w)Q_2(z,w)}{\left(\frac{1}{z} - \frac{1}{w}\right)\left(1 - \frac{1}{zw}Q_1(z,w)Q_2(z,w)\right)}.
\end{align*}
However, notice
\begin{align*}
&\frac{1}{z} \left(1 + \frac{1}{w} Q_1(z,w)\right) \left(1 + \frac{1}{w} Q_2(z,w)\right) - \frac{1}{w} \left(1 + \frac{1}{z} Q_1(z,w)\right) \left(1 + \frac{1}{z} Q_2(z,w)\right) \\
&= \left(\frac{1}{z} - \frac{1}{w}\right) \left(1 - \frac{1}{zw}Q_1(z,w)Q_2(z,w)\right).
\end{align*}
Hence, by multiply by the denominator in the expression for $K_{a_1a_2,b_2b_1} \left( c^{\inv}_{a_1a_2}(z),  c^{\inv}_{b_2b_1}(w)\right)$, moving all negative terms to the opposite side, adding one to both sides, and factoring, we obtain that
\begin{align*}
& \left(1 + \frac{1}{w} Q_1(z,w)\right)\left(1 + \frac{1}{w} Q_2(z,w)\right) \left(1 + \frac{1}{z}   K_{a_1a_2,b_2b_1} \left( c^{\inv}_{a_1a_2}(z),  c^{\inv}_{b_2b_1}(w)\right) \right)\\
&= \left(1 + \frac{1}{z} Q_1(z,w)\right)\left(1 + \frac{1}{z} Q_2(z,w)\right) \left(1 + \frac{1}{w}K_{a_1a_2,b_2b_1} \left( c^{\inv}_{a_1a_2}(z),  c^{\inv}_{b_2b_1}(w)\right) \right).
\end{align*}
By dividing both sides by 
\[
\left(1 + \frac{1}{w} Q_1(z,w)\right)\left(1 + \frac{1}{w} Q_2(z,w)\right)\left(1 + \frac{1}{w}K_{a_1a_2,b_2b_1} \left( c^{\inv}_{a_1a_2}(z),  c^{\inv}_{b_2b_1}(w)\right) \right)
\]
and applying holomorphic extensions then yields the result.
\end{proof}

\section{Other Settings}
\label{sec:OtherSettings}

In this final section, we discuss extensions of the opposite bi-free partial $S$-transform to both the operator-valued bi-free and the conditional bi-free settings.

\subsection{Operator-Valued Bi-Free $S$-Transforms}

In \cite{S2016-3} the author extended the bi-free partial $S$-transform to the operator-valued bi-free setting.  This generalization required a function of three arguments; one for left $B$-operators, one for right $B$-operators, and a central $B$-operator.  We briefly recall the results of \cite{S2016-3} here although we refer the reader to \cite{S2016-3} for full definitions.

\begin{defn}[\cite{S2016-3}*{Definition 8.1}]
Let $(\A, E, \varepsilon)$ be a Banach $B$-$B$-non-commutative probability space.  For each $b\in B$ let $L_b = \varepsilon(b \otimes 1)$ and $R_b = \varepsilon(1 \otimes b)$.  Let $X \in \A_\ell$ and $Y \in \A_r$ be such that $E(Y)$ and $E(X)$ are invertible.  For $b,c,d \in B$, define
\[
K_{X, Y}(b,c,d) := \sum_{n,m\geq 1} \kappa^B_{\chi_{n,m}}(  \underbrace{L_b X, \ldots, L_b X}_{n \text{ entries }}, \underbrace{R_d Y, \ldots, R_d Y}_{m-1 \text{ entries }}, R_d Y R_c)
\]
where $\kappa^B$ denotes the $B$-valued bi-free cumulant.

The \emph{operator-valued bi-free partial $S$-transform of $(X, Y)$} is the analytic function defined by
\begin{align*}
S_{X, Y}(b,c,d) &= c + b^{-1} \Upsilon_{X, Y}(b,c,d) + \Upsilon_{X, Y}(b,c,d) d^{-1} + b^{-1} \Upsilon_{X, Y}(b,c,d) d^{-1}
\end{align*}
where
\[
\Upsilon_{X, Y}(b,c,d) := K_{X, Y}\left(\Phi^\inv_{\ell, X}(b), c, \Phi^\inv_{r,Y}(d)\right) = K_{X, Y}\left(b S^\ell_X(b), c, S^r_Y(d)d \right)
\]
for any bounded collection of $c$ provided $b$ and $d$ sufficiently small.
\end{defn}

\begin{thm}[\cite{S2016-3}*{Theorem 8.3}]
\label{thm:operator-S}
Let $(\A, E, \varepsilon)$ be a Banach $B$-$B$-non-commutative probability space, let $(X_1, Y_1)$ and $(X_2, Y_2)$ be bi-free over $B$ with $E(X_k)$ and $E(Y_k)$ invertible for all $k$.  Then
\begin{align*}
S_{X_1 X_2, Y_1Y_2}& (b,c,d) \\ & = S^\ell_{X_2}(b)S_{X_1, Y_1}\left(S^\ell_{X_2}(b)^{-1}bS^\ell_{X_2}(b), \,\, S^\ell_{X_2}(b)^{-1}      S_{X_2, Y_2}(b,c,d) S^r_{Y_2}(d)^{-1}, \,\, S^r_{Y_2}(d) d S^r_{Y_2}(d)^{-1}\right) S^r_{Y_2}(d)
\end{align*}
for any bounded collection of $c$ provided $b$ and $d$ sufficiently small.
\end{thm}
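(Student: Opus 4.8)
The plan is to run the combinatorial argument by which the scalar bi-free partial $S$-transforms are established — expand the relevant cumulant series through a Kreweras-complement decomposition and then resum the two pairs separately — now in the $B$-valued setting, where the essential new feature is that the weights threaded through the diagrams are non-commuting elements of $B$ rather than central scalars. Here the scalar prototype is the multiplicativity $S_{a_1a_2,b_1b_2}=S_{a_1,b_1}S_{a_2,b_2}$ from \cite{S2016-1} rather than the opposite identity of Theorem~\ref{thm:S-op-property}. The engine is the operator-valued analogue of Theorem~\ref{thm:cumulants-and-Kreweras}: one expands $\kappa^B_{\chi_{n,m}}$ applied to the products built from $L_b(X_1X_2)$, $R_d(Y_1Y_2)$, and the central $R_c$ as a sum over $\pi \in BNC(n,m)$ of $\kappa^B_\pi$ of the first pair against $\kappa^B_{K(\pi)}$ of the second, with the inserted operators $L_b$, $R_c$, $R_d$ carried along the blocks in the left/right order dictated by the bi-non-crossing structure. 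Since $\kappa^B$ is only $B$-multilinear, the position of each inserted weight relative to the entries $X_k$ and $Y_k$ must be tracked throughout rather than commuted away as in the scalar case.

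First I would isolate the building block $K_{X_1X_2,Y_1Y_2}(b,c,d)$ and decompose each diagram into the part governed by the first pair and the complementary part governed by the second, exactly as the Kreweras decomposition is organised in the proof of Lemma~\ref{lem:K-expression}. The blocks of $K(\pi)$ lying inside the first-pair skeleton resum, via the operator-valued moment-cumulant relations, into the marginal transforms $S^\ell_{X_2}$ and $S^r_{Y_2}$ together with the full transform $S_{X_2,Y_2}$; this is where geometric-series bookkeeping analogous to Lemmata~\ref{lem:1}--\ref{lem:5} is required, now with $B$-valued coefficients that no longer commute. The surviving first-pair skeleton resums into $K_{X_1,Y_1}$ evaluated at $B$-valued arguments, and invoking the operator-valued counterpart of the composition identity~\eqref{eq:convolution-with-inverse-series} for $\Phi^{\inv}_{\ell,X_1X_2}$ and $\Phi^{\inv}_{r,Y_1Y_2}$ rewrites those arguments in terms of $\Phi^{\inv}_{\ell,X_1}$ and $\Phi^{\inv}_{r,Y_1}$ of conjugated variables, yielding precisely the outer slots $S^\ell_{X_2}(b)^{-1}\,b\,S^\ell_{X_2}(b)$ and $S^r_{Y_2}(d)\,d\,S^r_{Y_2}(d)^{-1}$ of the statement.

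With the two resummations in hand I would assemble the result through the defining relation $S_{X,Y}(b,c,d) = c + b^{-1}\Upsilon_{X,Y}(b,c,d) + \Upsilon_{X,Y}(b,c,d)\,d^{-1} + b^{-1}\Upsilon_{X,Y}(b,c,d)\,d^{-1}$. The central slot $S^\ell_{X_2}(b)^{-1} S_{X_2,Y_2}(b,c,d) S^r_{Y_2}(d)^{-1}$ appears when the weight $R_c$ is combined with the contribution of those blocks of $K(\pi)$ that join the left and right halves — the blocks that $S_{X_2,Y_2}$ itself tabulates — after factoring $S^\ell_{X_2}(b)$ out to the far left and $S^r_{Y_2}(d)$ out to the far right of the whole expression. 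Collecting the three slots then produces the conjugated form of $S_{X_1,Y_1}$ flanked by $S^\ell_{X_2}(b)$ and $S^r_{Y_2}(d)$.

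The step I expect to be the main obstacle is the precise placement of the non-commuting conjugators $S^\ell_{X_2}(b)^{\pm 1}$ and $S^r_{Y_2}(d)^{\pm 1}$: in the scalar arguments of this paper and of \cite{S2016-1} every weight is central, so the analogues of these factors commute freely and the transform is a bare product, whereas here one must verify — diagram by diagram, and consistently across the whole sum — that each conjugator threads through the Kreweras complement and lands on the correct side of every cumulant. A secondary difficulty is pinning down the conjugation-twisted multiplicative form of $\Phi^{\inv}_{\ell,X_1X_2}$ and $\Phi^{\inv}_{r,Y_1Y_2}$ used above, an identity that is vacuous in the commutative setting but carries real content over $B$.
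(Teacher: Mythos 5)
This theorem is not proved in the present paper at all: it is recalled verbatim from \cite{S2016-3}*{Theorem 8.3}, and the only indication given here of how its proof goes is the remark immediately following the statement, namely that one extends the proof of the \emph{non-opposite} scalar identity $S_{a_1a_2,b_1b_2}=S_{a_1,b_1}S_{a_2,b_2}$ of \cite{S2016-1}*{Theorem 4.5}, the crucial structural fact being that the bi-non-crossing diagrams yielding $K_{X_2,Y_2}$ are \emph{encapsulated} inside those yielding $K_{X_1,Y_1}$, so that $S_{X_2,Y_2}$ arises as a term \emph{composed} into $S_{X_1,Y_1}$. You correctly identify the scalar prototype and you correctly anticipate the final assembly (conjugation by $S^\ell_{X_2}(b)$ and $S^r_{Y_2}(d)$, with $S_{X_2,Y_2}(b,c,d)$ landing in the central slot), but the engine you propose to drive the computation is the wrong one, and this is a genuine gap rather than a presentational slip.

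Theorem \ref{thm:cumulants-and-Kreweras} is a statement about the \emph{opposite} product: it expresses $\kappa_{n,m}(a_1a_2,b_2b_1)$, not $\kappa_{n,m}(a_1a_2,b_1b_2)$, as $\sum_{\pi\in BNC(n,m)}\kappa_\pi(a_1,b_1)\kappa_{K(\pi)}(a_2,b_2)$. For the product $(X_1X_2,Y_1Y_2)$ treated in the present theorem, the configurations that survive bi-freeness are \emph{not} Kreweras-complementary pairs: in the circular order underlying bi-non-crossing diagrams, the entries of $(X_1X_2,Y_2Y_1)$ alternate perfectly between the two pairs (which is exactly why the $(\pi,K(\pi))$ pairing governs that case), whereas for $(X_1X_2,Y_1Y_2)$ same-pair entries become adjacent at the two left-right junctions and the pair-$2$ blocks instead nest inside the regions cut out by the pair-$1$ skeleton. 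Consequently the expansion you write down computes the cumulants of $(X_1X_2,Y_2Y_1)$, which already differ from those of $(X_1X_2,Y_1Y_2)$ at $(n,m)=(1,1)$. Compounding this, the ``geometric-series bookkeeping analogous to Lemmata \ref{lem:1}--\ref{lem:5}'' that you invoke is precisely the opposite-product recursion, and Section \ref{sec:OtherSettings} of this paper is devoted to explaining that this recursion \emph{cannot} be run over $B$: attempting to solve the operator-valued analogues of Lemmata \ref{lem:3} and \ref{lem:4} leaves $\psi_L$ trapped in the central slot of nested cumulant maps, $K_{a_2,b_2}\left(b', K_{a_1,b_1}(b,\psi_L,d), d'\right)$, from which it cannot be isolated. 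In other words, the route you chose is exactly the one the paper identifies as impassable in the operator-valued setting; Theorem \ref{thm:operator-S} is provable precisely because the non-opposite product has the encapsulation/composition combinatorics, and a correct proof must start from that nested decomposition of $\kappa^B_{\chi_{n,m}}$ (as in \cite{S2016-1}*{Section 4} and \cite{S2016-3}), not from Theorem \ref{thm:cumulants-and-Kreweras}.
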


Considering the above, it is natural to ask whether there is an operator-value version of the opposite bi-free partial $S$-transform.  However, this does not appear to be the case.

To prove Theorem \ref{thm:operator-S} it was possible to extend the proof of \cite{S2016-1}*{Theorem 4.5} because the bi-non-crossing diagrams that yielded $K_{X_2, Y_2}$ are encapsulated in those that produced $K_{X_1, Y_1}$ and hence $S_{X_2, Y_2}$ appeared as a term composed in $S_{X_1, Y_1}$. 

For the opposite bi-free partial $S$-transform as proved above, this encapsulation and composition do not take place.  In particular, the problem is with Lemmata \ref{lem:3}, \ref{lem:4}, and \ref{lem:5}.  Specifically the recursive nature of $\psi_L(z,w)$ and $\phi_R(z,w)$ is problematic in regards to extending the above proof to the operator-valued setting.  If one tries to extend Lemma \ref{lem:3} to the operator-valued setting, one obtains an expression for $\psi_L(z,w)$ in terms of a composition of $K_{a_2, b_2}$ with $\psi_R(z,w)$; that is, a term of the form $K_{a_2, b_2}(b, \psi_R(z,w), d)$.  Similarly, if one tries to extend Lemma \ref{lem:4} to the operator-valued setting, one obtains an expression for $\psi_R(z,w)$ in terms of a composition of $K_{a_1, b_1}$ with $\psi_L(z,w)$; that is, a term of the form $K_{a_1, b_1}(b, \psi_L(z,w), d)$.  If one attempts to solve for $\psi_L(z,w)$ as in Lemma \ref{lem:5}, the one obtains that $\psi_L(z,w)$ equals an expression involving $K_{a_2, b_2}(b', K_{a_1, b_1}(b, \psi_L(z,w), d), d')$.  Thus one cannot isolate and solve for $\psi_L(z,w)$ due to the properties of operator-valued bi-free cumulants and the inability to pull out the $c$-terms in $K_{X, Y}(b,c,d)$.  Even if one views these three variable functions as linear maps from $B$ to $B$ in the central variable, one only obtains that $\psi_L(z,w)$ is an inverse of a rather nasty linear map.

Thus, at least at this time, it appears the $S$-transform is more natural to the operator-valued setting then the opposite bi-free partial $S$-transform.

\subsection{Conditional Bi-Free $S$-Transforms}

In \cites{GS2016-1, GS2016-2} the study of conditional bi-free and operator-valued conditional bi-free probability was initiated. Using the conditional bi-free cumulants, analogues of the bi-free partial $R$-transforms were constructed in both the scalar and operator-valued conditional bi-free settings and used to discuss additive infinitely divisibility.  However, a conditional bi-free partial $S$-transform remains elusive.  Here we quickly record what occurs when one tries to repeat the combinatorial proof for the bi-free partial $S$-transform and the opposite bi-free partial $S$-transform in the conditional bi-free setting.

Given a two-state non-commutative probability space $(\A, \varphi, \psi)$ and $a,b \in \A$, let
\begin{align*}
c^c_a(z) &:= \sum_{n\geq 1} \kappa^c_n(a) z^n \\
K^c_{a,b}(z,w) &L= \sum_{n,m\geq 1} \kappa^c_{n,m}(a,b) z^n w^m
\end{align*}
where $\kappa^c_n(a)$ denotes the $n^{\mathrm{th}}$ conditional free cumulant of $a$ and $\kappa^c_{n,m}(a,b)$ denotes the $(n,m)$-th conditional bi-free cumulant of $(a,b)$.  

If $a_1$ and $a_2$ are conditionally free in with respect to $(\varphi, \psi)$ and have non-zero moments, then \cite{PW2011}*{Theorem 3.6} demonstrated that if
\[
S^c_a(z) = \frac{c^c_{a}\left(c^\inv_{a}(z)\right)}{c^\inv_{a}(z)}
\qquad
\text{then}
\qquad
S^c_{a_1a_2}(z) = S^c_{a_1}(z) S^c_{a_2}(z).  
\]
Thus, based on \cite{PW2011}, it is natural to consider
\[
K^c_{a,b}\left(c^\inv_{a}(z),c^\inv_{b}(w)\right)
\]
in the bi-free setting.

Suppose $(a_1, b_1)$ and $(a_2, b_2)$ are conditional bi-free pairs in $(\A, \varphi, \psi)$ with non-zero moments. If one repeats the arguments of \cite{S2016-1}*{Section 4} keeping track of which blocks are internal and which blocks are external, then one obtains by using results from \cite{PW2011} that 
\begin{align*}
K^c_{a_1a_2,b_1b_1} & \left(c^\inv_{a_1a_2}(z),c^\inv_{b_1b_2}(w)\right) \\
&= \frac{c^c_{a_2}\left(c^\inv_{a_2}(z)\right)}{z}\frac{c^c_{ab_2}\left(c^\inv_{b_2}(w)\right)}{w} K^c_{a_2, b_2} \left(c^\inv_{a_2}(z),c^\inv_{b_2}(w)\right) \\
& \qquad + \left[1 + \left(\frac{1}{z} + \frac{1}{w} + \frac{1}{zw}\right) K_{a_2, b_2} \left(c^\inv_{a_2}(z),c^\inv_{b_2}(w)\right)\right] K^c_{a_1, b_1} \left(c^\inv_{a_1}(z),c^\inv_{b_1}(w)\right).
\end{align*}
(As a sanity check, note that when $\varphi = \psi$ the conditional free and bi-free cumulants are equal to the free and bi-free cumulants respectively, and this reduces to the expression obtained in \cite{S2016-1}*{Section 4}).  However, even when combining this with the expression of $K_{a_1a_2,b_1b_2} \left(c^\inv_{a_1a_2}(z),c^\inv_{b_1b_2}(w)\right)$, it is not clear how to define $S^c_{a,b}(z,w)$ in order to obtain a product formula seeing as the expression for $K^c_{a_1a_2,b_1b_2} \left(c^\inv_{a_1a_2}(z),c^\inv_{b_1b_2}(w)\right)$ is not symmetric in $K^c_{a_1, b_1}$ and $K^c_{a_2, b_2}$ (i.e. if we had $S^c_{a_1a_2,b_1b_2}(z,w) = S^c_{a_1,b_1}(z,w)S^c_{a_2,b_2}(z,w)$ and $S^c_{a,b}(z,w)$ involved $K^c_{a,b} \left(c^\inv_{a}(z),c^\inv_{b}(w)\right)$, the right-hand side would be symmetric in $K^c_{a_1, b_1}$ and $K^c_{a_2, b_2}$).

If one repeats the arguments of Section \ref{sec:Proof} keeping track of which blocks are internal and which blocks are external, then one obtains by using results from \cite{PW2011} the conditional bi-free analogue of Lemma \ref{lem:K-expression} is
\begin{align*}
K^c_{a_1a_2,b_2b_1}& \left( c^{\inv}_{a_1a_2}(z),  c^{\inv}_{b_2b_1}(w)\right) \\
&= \frac{\frac{c_{b_2}^c\left(c^\inv_{b_2}(w)\right)}{w} \left(1+\frac{1}{z}Q_2(z,w)\right ) Q^c_1(z,w) + \frac{c_{a_1}^c\left(c^\inv_{a_1}(z)\right)}{z}\left(1+\frac{1}{w} Q_1(z,w)\right) Q^c_2(z,w) }{1 - \frac{1}{zw}Q_1(z,w)Q_2(z,w)}
\end{align*}
where
\begin{align*}
Q^c_j(z,w) = K^c_{a_j,b_j}\left( c^{\inv}_{a_j}(z),  c^{\inv}_{b_j}(w)\right). 
\end{align*}
(As a sanity check, note that when $\varphi = \psi$ the conditional free and bi-free cumulants are equal to the free and bi-free cumulants respectively, and this reduces to the expression obtained in Lemma \ref{lem:K-expression}).  However, even when combining this with the expression of $K_{a_1a_2,b_2b_1} \left(c^\inv_{a_1a_2}(z),c^\inv_{b_2b_1}(w)\right)$, it is not clear how to define $S^{c,\op}_{a,b}(z,w)$ in order to obtain a product formula seeing as the expression for $K^c_{a_1a_2,b_2b_1} \left(c^\inv_{a_1a_2}(z),c^\inv_{b_2b_1}(w)\right)$ is not symmetric in $K^c_{a_1, b_1}$ and $K^c_{a_2, b_2}$.

Thus, due to the lack of symmetry, conditional bi-free partial $S$-transforms appear difficult.  Even in the simplest case of bi-Boolean independence in \cite{GS2017}, no product formulae were obtainable.

\section*{Acknowledgements}

The author would like to thank Yinzheng Gu for informing him of the existence of \cite{HW2017}.

\end{document}